\theoremstyle{definition}\newtheorem{Def}{Definition}
\theoremstyle{plain}\newtheorem{Th}{Theorem}
\theoremstyle{remark}\newtheorem{Rem}{Fact}
\theoremstyle{plain}\newtheorem{Le}{Lemma}
\theoremstyle{plain}\newtheorem{Cor}{Corollary}
\newcommand{\Cam}{\dot{C}}
\newcommand{\BMO}{\mathrm{BMO}}
\newcommand{\Lip}{\mathrm{Lip}}
\DeclareMathOperator{\supp}{supp}
\author{Nikolay N. Osipov\thanks{The author is supported by Chebyshev Laboratory of SPbU (RF Government grant no. 11.G34.31.0026), 
by RFBR (grant no. 11-01-00526), and by Rokhlin grant.}}
\title{\vskip-1cm Littlewood--Paley--Rubio de Francia inequality in Morrey--Campanato spaces}
\begin{document}
\maketitle

{\small
\begin{center}
\vskip-1cm
St.~Petersburg Department 
of Steklov Mathematical Institute RAS,\\Fontanka 27, St.~Petersburg, 
Russia
\end{center}
\begin{center}
Chebyshev Laboratory (SPbU), 14th Line 29B, Vasilyevsky Island, St.~Petersburg, Russia
\end{center}

\begin{center}
{\em 
nicknick@pdmi.ras.ru
} 
\end{center}}
\vskip1cm

\begin{abstract}
Rubio de Francia proved the one-sided Littlewood--Paley inequality for arbitrary intervals in $L^p$, $2\le p<\infty$.
In this article, his methods are developed and employed to prove an analogue of such an inequality 
``beyond the index $p=\infty$'', i.e., for spaces of H\"older functions and $\BMO$.
\end{abstract}
%\newpage

\section{History}\label{Hist}
Let $\{\Delta_m\}$ be a finite or countable collection of mutually disjoint intervals in~$\mathbb{R}$.
By $M_{\Delta_m}$ we denote the operators corresponding to the multipliers~$\chi_{\Delta_m}$: $M_{\Delta_m} f = (\widehat{f}\chi_{\Delta_m})^{\vee}$.
In 1983, Rubio de Francia (see~\cite{Ru}) proved that
\begin{equation}\label{RFEstimate}
	\Big\|\Big(\sum_m |M_{\Delta_m}f|^2 \Big)^{1/2}\Big\|_{L^p(\mathbb{R})} \le C_p\|f\|_{L^p(\mathbb{R})},
	\quad 2 \le p < \infty,
\end{equation}
where the constant $C_p$ does not depend on~$f$ or~$\{\Delta_m\}$.
By duality, this estimate is equivalent to the following one:
\begin{equation}\label{DualEstimate}
	\Big\|\sum_{m}f_{m}\Big\|_{L^p(\mathbb{R})} \le
	C_p\Big\|\Big(\sum_m |f_m|^2 \Big)^{1/2}\Big\|_{L^p(\mathbb{R})},\quad 1<p \le 2,
\end{equation}
where $\{f_m\}$ is a sequence of functions such that $\supp{\widehat{f}_{m}} \subset \Delta_{m}$.
In 1984, Bourgain (see~\cite{Bo}) proved that estimate~\eqref{DualEstimate} remains true for $p=1$.
His method was more complicated then Rubio de Francia's arguments. But in 2005, Kislyakov and Parilov employed 
a technique similar to Rubio de Francia's and established (see~\cite{KiPa}) 
that estimate~\eqref{DualEstimate} is fulfilled for all $0 < p \le 2$. 
For $p\le 1$, this is sooner an $H^p$- than an $L^p$-estimate, because the main techniques lean heavily upon the properties of Calder\'on--Zygmund operators on ``real variable'' Hardy classes~$H^p$ (see, e.g.,~\cite{FeSt, St}).

Now, the dual of $H^1$ is $\BMO$ and the dual of $H^p$ for $p<1$ is a certain H\"older class, see~\cite{Du,St}. It is natural to ask whether there exists a dual counterpart of~\eqref{DualEstimate} for $0<p\le 1$.
Note that, unlike the case of ${1<p\le 2}$, naive dualization is impossible because the $H^p$-classes arise in the proof of~\eqref{DualEstimate} in a somewhat tricky way. By way of explanation, we observe that a function~$f$ with
spectrum in an interval~$[a,b]$ gives rise to at least two natural $H^p$-functions, namely, $e^{-2\pi i\, ax} f(x)$ and $e^{-2\pi i\, bx} f(x)$ (one of them is ``analytic'' and the other ``antianalytic''), and the two are required
in the proof of~\eqref{DualEstimate}.
%They used the theory of 
%Calder\'on--Zyg\-mund operators on Hardy spaces~$H^p$. 
%The question naturally arises: is any analogue of estimate~\eqref{RFEstimate} fulfilled for the spaces dual to the classes~$H^p$ (here we refer to the ``real'' Hardy classes, which are described, for example, in~\cite{St})?

To be more specific, we signalize that in \cite{Ki, KiPa, Ru}, the authors studied, in fact, operators of the following form:
$$
	S^{1}f(x)=\Big\{ e^{-2\pi i\,a_m x}M_{\Delta_m}f(x)\Big\}\quad\mbox{and}\quad
	S^{2}f(x)=\Big\{ e^{-2\pi i\,b_m x}M_{\Delta_m}f(x)\Big\}\rule{0pt}{20pt}
$$
(or their adjoints), where $a_m$ and $b_m$ are the left and the right end of~$\Delta_m$ respectively. It is well known and easily seen that these operators are not
bounded on the spaces dual to $H^p$. We show this for the space $\BMO = (H^1)^\ast$. Suppose 
$$
	\|M_{[0,\eta]}(f)\|_\BMO \le C\|f\|_{\BMO},
$$ 
where $f\in L^2\cap\BMO$ and $C$ does not depend on $f$ or $\eta$.
Consider some function $\varphi \in C^\infty(\mathbb{R})$ such that $\varphi \equiv 1$ on $[1,2]$ and $\varphi \equiv 0$ outside $[0,3]$.
Then the operators corresponding to the Fourier multipliers~$\varphi(\eta\xi)$, $\eta>0$, are uniformly bounded from $L^1$ to $H^1$ (because they map $L^1$ to functions with spectrum in~$\mathbb{R}_+$) and, therefore,
from $\BMO$ to~$L^{\infty}$. But this means that $\|f\|_{\BMO}\asymp\|f\|_{L^{\infty}}$, provided $\supp \widehat f \subset [\eta,2\eta]$.
On the other hand, if we put $\Delta = \big[0,\tfrac{3}{2}\eta\big]$, then $\|M_\Delta f\|_{\BMO}\le C\|f\|_{\BMO}$ by our assumtion. 
If $\supp \widehat f \subset [\eta,2\eta]$, this implies that $\|M_\Delta f\|_{L^\infty}\le C\|f\|_{L^\infty}$. 
But therefore, the Riesz projection (the antianalytic one) is uniformly bounded in~$L^{\infty}$ on all the functions such that their Fourier
transforms have compact supports. This is a contradiction.

However, after a slight modification, the operators~$S^1$ and~$S^2$ become bounded on $\BMO$ and spaces of H\"older functions. It is only necessary to smooth out 
each multiplier~$\chi_{\Delta_m}$ at one of the ends of $\Delta_m$. 
Here is quite a particular case of what will be proved. Consider some functions $\psi^1$ and $\psi^2$ in $C^{\infty}([0,1])$ such that $\psi^1$ supported in $\big[0, \tfrac{2}{3}\big]$, $\psi^2$ supported in $\big[\tfrac{1}{3}, 1\big]$, 
and $\psi^1 + \psi^2 \equiv 1$ on $[0,1]$. We extend these functions to the whole line by zero. After that, using shifts and dilations, we can obtain 
functions~$\psi^1_m$ and~$\psi^2_m$ such that $\psi^1_m + \psi^2_m\equiv \chi_{\Delta_m}$. Each such function is smooth at one of the ends of the corresponding interval. Now we redefine the operators~$S^1$ and~$S^2$, using the
functions~$\psi^1_m$ instead of $\chi_{\Delta_m}$ for $S^1$ and the functions~$\psi^2_m$ for $S^2$. It turns out that if $f \in L^2(\mathbb{R}) \cap \BMO(\mathbb{R})$, then $S^\sigma f \in \BMO(\mathbb{R})$, $\sigma = 1,2$, and 
$\|S^\sigma f\|_{\BMO}\le C\|f\|_{\BMO}$. The same can be said if we take, instead of $\BMO$, a H\"older class $\Lip_s(\mathbb{R})$, $0<s<1$ (see~\eqref{LipDef}).
We will also see that it is possible to tell something substantial in the spirit of formula~\eqref{RFEstimate} for functions that satisfy the $\BMO$ condition or a H\"older condition at only one point.

It is also worth mentioning that multidimensional analogs of~\eqref{RFEstimate} and~\eqref{DualEstimate} exist for all $p$, $0<p \le \infty$, (see \cite{Jo, Os1, Os2}) and they are not a direct consequence of the one-dimensional results. 
It would be interesting to obtain a multidimensional version of the results presented in this paper.

\section{Preliminaries}\label{Prelim}
In our context, it is most natural to endow spaces of smooth functions with Morrey--Campanato norms.

\paragraph{Morrey--Campanato spaces.}
Let $\mathcal{P}_i$ be the space of algebraic polynomials of degree strictly less than~$i$. We agree that ${\mathcal{P}_0 = \{0\}}$.
For $l^2$-valued polynomials, we use the notation $\mathcal{P}_i(l^2)$. 
Now we give the definition of the Morrey--Campanato spaces~$\Cam_{p}^{s,i}$.
\begin{Def}
	Suppose $i\in\mathbb{Z}_+$, $1\le p<\infty$, and $s\in(-n/p,i]$.
	Let $f$ be a locally integrable function on~$\mathbb{R}^n$ (scalar-valued or $l^2$-valued). 
	We say that $f \in \Cam_{p}^{s,i}$ if
	$$
		\|f\|_{i,p,s} = \sup_{Q}\inf_{P}\frac{1}{|Q|^{s/n}}\bigg(\frac{1}{|Q|}\int\limits_{Q}\big|f-P\big|^p\bigg)^{1/p}<\infty,
	$$
	where the supremum is taken over all the cubes in $\mathbb{R}^n$ and the infimum is taken over all the polynomials in 
	$\mathcal{P}_i$ or $\mathcal{P}_i(l^2)$.
\end{Def}
The John--Nirenberg inequality implies that $\BMO = \Cam_{p}^{0,1}$ for all ${p\in [1,\infty)}$ (in the sense that the norms are
equivalent). We also state a counterpart of this for $s>0$.
\begin{Le}
	If $i\in\mathbb{N}$ and $0< s\le i$\textup, then $\Cam_{p}^{s,i}$ does not depend on~$p$\textup, ${1\le p<\infty}$.
\end{Le}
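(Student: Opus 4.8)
The plan is to prove the claim the same way one proves the classical $p$-independence of $\BMO$ via the John--Nirenberg inequality, adapted to the higher-order oscillation. Fix $i\in\mathbb{N}$ and $0<s\le i$. Since H\"older's inequality gives $\|f\|_{i,p,s}\le\|f\|_{i,q,s}$ whenever $p\le q$, the content is the reverse bound: if $f\in\Cam_1^{s,i}$ then $f\in\Cam_q^{s,i}$ for every finite $q$, with $\|f\|_{i,q,s}\le C\|f\|_{i,1,s}$, where $C$ depends only on $n$, $i$, $q$, $s$. So normalize $\|f\|_{i,1,s}=1$ and fix a cube $Q$; I must produce a polynomial $P\in\mathcal P_i$ with
\[
	\Big(\frac1{|Q|}\int_Q|f-P|^q\Big)^{1/q}\le C\,|Q|^{s/n}.
\]

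The first step is to record the standard structural consequence of the $\Cam_1^{s,i}$ condition: for each cube $Q$ let $P_Q$ denote a near-optimal polynomial in $\mathcal P_i$, so that $\frac1{|Q|}\int_Q|f-P_Q|\le 2|Q|^{s/n}$. Comparing $P_Q$ on a cube with $P_{Q'}$ on a subcube $Q'\subset Q$ of comparable size uses the fact that on $\mathcal P_i$ all norms are equivalent (finite dimension), so $\|P_Q-P_{Q'}\|_{L^\infty(Q')}\lesssim \frac1{|Q'|}\int_{Q'}|P_Q-P_{Q'}|\lesssim |Q|^{s/n}$ by the triangle inequality through $f$. Iterating this comparison along a dyadic tower of subcubes shrinking to a point, and crucially using that the bounds $|Q'|^{s/n}$ form a \emph{geometrically decaying} series because $s>0$, shows the polynomials $P_{Q'}$ converge and lets one control $|f-P_Q|$ by a telescoping sum. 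This is exactly the point where the hypothesis $s>0$ (as opposed to $s=0$, where one genuinely needs John--Nirenberg) enters: the geometric decay replaces the exponential-integrability argument.

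Concretely, the second step is a good-$\lambda$ / stopping-time estimate in the spirit of John--Nirenberg. Fix $Q$ and run a Calder\'on--Zygmund stopping-time decomposition of $Q$ relative to the mean oscillation: select maximal dyadic subcubes on which the local oscillation of $f$ (measured against the ambient polynomial $P_Q$) first exceeds a threshold $\lambda |Q|^{s/n}$. Standard arguments bound the total measure of the selected cubes of the first generation by $C2^{-c\lambda}|Q|$ for a large fixed $\lambda$, and on each selected cube the excess is again a $\Cam_1^{s,i}$-type quantity but now at the smaller scale, so the generations decay geometrically both in measure \emph{and} in amplitude (here $s>0$ gives the amplitude gain $|Q_k|^{s/n}\le 2^{-ks/n}|Q|^{s/n}$). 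Summing the level sets yields
\[
	\big|\{x\in Q:\ |f(x)-P_Q(x)|>t|Q|^{s/n}\}\big|\le C\,e^{-ct}|Q|,\qquad t>0,
\]
which immediately integrates to the desired $L^q$ bound $\frac1{|Q|}\int_Q|f-P_Q|^q\le C_q|Q|^{qs/n}$, and taking $q$-th roots finishes the proof. (In fact the $s>0$ decay even gives the stronger conclusion that $|f-P_Q|/|Q|^{s/n}$ is bounded, i.e.\ a local H\"older/Campanato estimate, but the $L^q$ statement is all that is claimed.)

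The main obstacle is bookkeeping the interaction between the varying normalizing polynomials $P_Q$ at different scales: each time one passes to a subcube the reference polynomial changes, and one must show the accumulated discrepancy $\sum_k\|P_{Q_k}-P_{Q_{k-1}}\|_{L^\infty(Q_k)}$ stays bounded. This is handled by the finite-dimensional norm equivalence on $\mathcal P_i$ together with the geometric decay of the scales, but making the constants uniform in $Q$ and tracking the dependence on $i$ and $n$ is the part that requires care; everything else is a routine adaptation of the classical $\BMO$ argument.
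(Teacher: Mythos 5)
The paper itself does not prove this lemma; it cites Campanato, Meyers, and Section~1.1.2 of Kislyakov--Kruglyak for the proof. Your first step is exactly the argument those sources use, and it is already a complete proof on its own: along a dyadic chain $Q=Q_0\supset Q_1\supset\cdots\ni x$ one has $\|P_{Q_{k+1}}-P_{Q_k}\|_{L^\infty(Q_{k+1})}\lesssim |Q_k|^{s/n}\lesssim 2^{-ks}|Q|^{s/n}$ by the finite-dimensional norm equivalence on $\mathcal P_i$ together with the triangle inequality through $f$, and since $s>0$ the telescoping series converges, giving $|f(x)-P_Q(x)|\lesssim |Q|^{s/n}$ for a.e.\ $x\in Q$; this $L^\infty$ bound trivially implies every finite-$q$ bound. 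The one point to make explicit in Step~1 is that $P_{Q_k}(x)\to f(x)$ at Lebesgue points of $f$, which follows because $\frac1{|Q_k|}\int_{Q_k}|f-P_{Q_k}|\to 0$ and the norm equivalence on $\mathcal P_i$ converts the $L^1$ smallness of $P_{Q_k}-f(x)$ on $Q_k$ into $L^\infty$ smallness. Your second step, the stopping-time/good-$\lambda$ decomposition, is therefore a detour: the John--Nirenberg exponential-distribution route is what one needs at $s=0$, but for $s>0$ the geometric amplitude decay collapses that iteration into the very same $L^\infty$ conclusion, as your closing parenthetical already observes. The argument is correct, but Step~2 can simply be deleted.
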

The proof can be found in~\cite{Ca,Me}; see also the exposition in Section~1.1.2 of~\cite{KiKr}. 
For $s>0$, this equivalence of norms extends naturally to $p=\infty$ in a sense. More precisely, in this case Morrey--Campanato spaces coincide with standard spaces of functions with a power-type modulus of smoothness. 
Again, see~\cite{Ca,Me} and the exposition in~\cite{KiKr}. First, we formulate separately the result for $0<s\le 1$.
\begin{Le}
	If $0<s\le 1$ and $1\le p<\infty$\textup, then the space $\Cam_{p}^{s,1}$ coincides with the Lipschitz class $\Lip_s$ that is determined by 
	the seminorm
	\begin{equation}\label{LipDef}
		\|f\|_{\Lip_s} = \sup_{x\ne y}\frac{|f(x)-f(y)|}{|x-y|^s}.
	\end{equation}
\end{Le}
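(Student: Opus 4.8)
The plan is to prove the two inclusions $\Cam_p^{s,1}\subseteq\Lip_s$ and $\Lip_s\subseteq\Cam_p^{s,1}$ separately, and by the preceding lemma it suffices to work with any single convenient value of $p$; I would take $p=1$ for concreteness, but the argument is the same for all finite $p$. The easy direction is $\Lip_s\subseteq\Cam_p^{s,1}$: given a cube $Q$ with centre $x_Q$, take the constant polynomial $P\equiv f(x_Q)$ (note $\mathcal{P}_1$ is exactly the constants). Then for $x\in Q$ one has $|f(x)-f(x_Q)|\le\|f\|_{\Lip_s}|x-x_Q|^s\le C\|f\|_{\Lip_s}|Q|^{s/n}$, where the last inequality uses that the diameter of $Q$ is comparable to $|Q|^{1/n}$. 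Dividing by $|Q|^{s/n}$ and averaging over $Q$ gives $\|f\|_{1,1,s}\le C\|f\|_{\Lip_s}$ immediately.

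The substantive direction is $\Cam_p^{s,1}\subseteq\Lip_s$. First I would record the standard fact that for any two cubes $Q\subseteq Q'$ with $|Q'|\le 2^n|Q|$, the minimizing constants $P_Q,P_{Q'}$ (which one may replace by the averages $\langle f\rangle_Q,\langle f\rangle_{Q'}$ at the cost of a constant) satisfy $|P_Q-P_{Q'}|\le C\|f\|_{1,1,s}|Q|^{s/n}$; this follows by writing $P_Q-P_{Q'}=\langle (P_Q-f)+(f-P_{Q'})\rangle_Q$ and using that $Q\subseteq Q'$ has comparable measure, so the $Q'$-average controls the $Q$-average. Now fix $x\neq y$, set $r=|x-y|$, and pick a cube $Q_0$ of sidelength $\sim r$ containing both $x$ and $y$. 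Telescoping down from $Q_0$ through a chain of dyadically shrinking cubes $Q_0\supseteq Q_1\supseteq Q_2\supseteq\cdots$ centred at $x$, with $|Q_{k+1}|=2^{-n}|Q_k|$, and summing the estimate just described, one gets that $\langle f\rangle_{Q_k}\to$ a limit $\tilde f(x)$ as $k\to\infty$, with $|\langle f\rangle_{Q_0}-\tilde f(x)|\le C\|f\|_{1,1,s}\sum_k(2^{-k}r)^s\le C'\|f\|_{1,1,s}r^s$ since $s>0$ makes the geometric series converge. By the Lebesgue differentiation theorem $\tilde f=f$ a.e., so after modifying $f$ on a null set we may assume $\tilde f(x)=f(x)$ for every $x$. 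The same estimate with $y$ in place of $x$ (using the \emph{same} top cube $Q_0$, which contains $y$ as well) gives $|f(x)-f(y)|\le|f(x)-\langle f\rangle_{Q_0}|+|\langle f\rangle_{Q_0}-f(y)|\le C\|f\|_{1,1,s}r^s$, which is exactly the $\Lip_s$ bound.

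I expect the main obstacle to be purely bookkeeping rather than conceptual: one must verify that the convergence $\langle f\rangle_{Q_k}\to f(x)$ genuinely produces a well-defined pointwise representative (not merely an a.e. one) and that the telescoping constants are uniform in the choice of chains, so that the limit does not depend on which shrinking sequence of cubes is used. This is where the condition $s>0$ is essential — it is what makes $\sum_k 2^{-ks}$ summable and hence forces the averages to be Cauchy; for $s=0$ this breaks down, consistently with the fact that $\Cam_p^{0,1}=\BMO\not\subseteq L^\infty$. One should also be mildly careful at the endpoint $s=1$, where $\mathcal{P}_1$ still consists only of constants (not affine functions, since the degree must be \emph{strictly} less than $i=1$), so the argument above applies verbatim and yields the classical Lipschitz class; no special treatment is needed.
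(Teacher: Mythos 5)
The paper itself gives no proof of this lemma; it simply refers to Campanato, Meyers, and the exposition in Kislyakov--Kruglyak. Your argument is precisely the standard one from those sources, and it is correct: the easy inclusion $\Lip_s\subseteq\Cam_p^{s,1}$ by testing the infimum against the constant $f(x_Q)$, and the converse via a telescoping chain of shrinking cubes whose best-approximating constants form a Cauchy sequence because $\sum_k 2^{-ks}<\infty$ for $s>0$. You also correctly flag the two genuine subtleties: that one must pass to the representative $\tilde f$ and check chain-independence of the limit, and that at $s=1$ the space $\mathcal{P}_1$ still contains only constants (not affine functions), which is exactly why one lands in $\Lip_1$ rather than the larger Zygmund class.

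One small housekeeping remark. As written, the chain $Q_0\supseteq Q_1\supseteq\cdots$ with $Q_k$ centred at $x$ for $k\ge1$ need not actually be nested inside a $Q_0$ that is only required to \emph{contain} $x$ and $y$: if $x$ sits near the boundary of $Q_0$, a concentric cube at $x$ of half the sidelength can stick out. This is easily repaired either by taking $Q_0$ somewhat larger (centred, say, at the midpoint of $x$ and $y$, with sidelength $4|x-y|$) and inserting one intermediate comparison, or by observing that your comparison estimate $|P_Q-P_{Q'}|\le C\|f\|_{1,1,s}\max(|Q|,|Q'|)^{s/n}$ holds for \emph{any} two overlapping cubes of comparable measure, not just nested ones (pass through a common cube of comparable size containing both); with that formulation the nesting issue disappears entirely. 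Modulo this routine bookkeeping, the proposal is complete.
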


In fact, all the Campanato spaces (the spaces~$\Cam_{p}^{s,i}$ for $s>0$) can be renormed in a similar way.
Namely, for a function~$f$ on~$\mathbb{R}^n$ and $h\in\mathbb{R}^n$, define the differences $\Delta_h^\alpha f$ by the recurrent formulas
$$
	\Delta_h^1 f(x) = f(x+h)-f(x),\quad \Delta_h^\alpha f(x) = \Delta_h^1\Delta_h^{\alpha-1}f(x),\quad \alpha=1,2,\dots.
$$

\begin{Le}
Let $i\in\mathbb{N}$\textup, $1\le p<\infty$\textup, and $0<s\le i$.
Then for the space~$\Cam_{p}^{s,i}$\textup, we can choose $\alpha \in \mathbb{N}$ such that $\Cam_{p}^{s,i}$ coincides with the class of 
locally integrable functions
satisfying
$$
|\Delta_h^{\alpha} f(x)|\le C|h|^s,
$$
where $C$ does not depend on~$x$ or~$h$. The infimum of such~$C$ provides an equivalent seminorm on~$\Cam_{p}^{s,i}$.
\end{Le}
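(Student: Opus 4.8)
The plan is to establish the equivalence by a two-sided comparison between the Campanato seminorm $\|f\|_{i,p,s}$ and the seminorm $\sup_{x,h}|h|^{-s}|\Delta_h^{\alpha}f(x)|$, where $\alpha$ is chosen large enough — concretely, any $\alpha>s$ will do, and $\alpha=i$ is a convenient choice since $s\le i$. One direction is the ``easy'' one: if $f\in\Cam_p^{s,i}$, then on any cube $Q$ containing $x,x+h,\dots,x+\alpha h$ (take $Q$ of side comparable to $|h|$) we subtract the near-optimal polynomial $P\in\mathcal P_i$ furnished by the definition of the seminorm. Since $\alpha\ge i$, the operator $\Delta_h^{\alpha}$ annihilates $P$, so $\Delta_h^{\alpha}f=\Delta_h^{\alpha}(f-P)$; then one estimates $|\Delta_h^{\alpha}f(x)|$ by a finite sum of values of $|f-P|$ at the points $x+j h$. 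To pass from pointwise values to the $L^p$ average appearing in the seminorm, I would first invoke Lemma~3 (coincidence of $\Cam_p^{s,i}$ with $\Lip_s$ when $0<s\le 1$) only as motivation and instead argue directly: by the already-established $p$-independence (Lemma~1), it suffices to treat, say, a large $p$, but cleaner is to run a standard averaging/telescoping argument — average the inequality $|\Delta_h^{\alpha}f(x)|\le \sum_j c_j|f(x+jh)-P(x+jh)|$ over a small cube of $h$'s and over translates, converting pointwise control into control by $\big(|Q|^{-1}\int_Q|f-P|^p\big)^{1/p}$. This yields $|\Delta_h^{\alpha}f(x)|\lesssim \|f\|_{i,p,s}|h|^s$.

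The substantive direction is the converse: from $|\Delta_h^{\alpha}f(x)|\le C|h|^s$ one must reconstruct, on each cube $Q$, a polynomial $P\in\mathcal P_i$ with $\big(|Q|^{-1}\int_Q|f-P|^p\big)^{1/p}\lesssim C|Q|^{s/n}$. This is where the main work lies, and it is the classical Campanato-type argument. The idea is: iterating finite differences controls higher-order oscillation, so one produces $P$ as a (weak) limit of Taylor-like polynomials associated with a dyadic chain of cubes. More explicitly, for a cube $Q$ let $P_Q$ be the $L^p$-projection of $f$ onto $\mathcal P_i$ restricted to $Q$ (or the averaged Taylor polynomial); the hypothesis on $\Delta_h^{\alpha}f$ gives, via a Marchaud-type inequality, that the modulus of smoothness $\omega_\alpha(f;t)_{L^p(Q)}\lesssim C t^s$ uniformly, and then the classical Whitney/Campanato estimate $\inf_{P\in\mathcal P_i}\|f-P\|_{L^p(Q)}\lesssim \omega_i(f;\ell(Q))_{L^p(Q)}$ (valid since $i\ge$ the needed order, with $\ell(Q)$ the side length) closes the loop. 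I would cite Whitney's inequality and the Marchaud inequality in the $L^p$ setting as known tools rather than reprove them.

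The genuine obstacle is the borderline regularity: when $s$ is a positive integer (in particular $s=i$), the naive choice $\alpha=s$ is not large enough — $\Delta_h^{s}$ does not see the top-order behaviour finely enough, and the condition $|\Delta_h^s f|\le C|h|^s$ is strictly weaker than membership in $\Cam_p^{s,i}$ (this is the familiar Zygmund-class phenomenon, where $\Lambda_*$ is larger than $\Lip_1$). The fix, and the reason the lemma only asserts existence of a \emph{suitable} $\alpha$, is to take $\alpha=i+1$ (or any $\alpha>i\ge s$); with $\alpha$ strictly above $s$, the Marchaud inequality upgrades the difference bound to the correct modulus-of-smoothness bound of order $i$, and the equivalence goes through. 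So in the write-up I would: (i) fix $\alpha$ to be the least integer exceeding $s$, noting $\alpha\ge i\ge s$ when $s=i$ forces $\alpha=i+1$ but $\alpha=i$ suffices when $s<i$; (ii) prove the easy direction by averaging as above; (iii) prove the hard direction via Marchaud $+$ Whitney, citing \cite{Ca,Me} and the exposition in \cite{KiKr} for the analytic facts; (iv) remark that $p$-independence of the resulting seminorm is then inherited from Lemma~1. The one computation I would actually carry out in detail is the annihilation identity $\Delta_h^{\alpha}P=0$ for $P\in\mathcal P_i$, $\alpha\ge i$, since it is the hinge of the easy direction.
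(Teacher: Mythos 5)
Your choice of $\alpha$ is wrong at the endpoint $s=i$, and the error runs in the opposite direction from what you believe. You assert that for $s=i$ the condition $|\Delta_h^{i}f|\le C|h|^{i}$ is ``strictly weaker than membership in $\Cam_p^{s,i}$'' and propose the fix $\alpha=i+1$. In fact, raising the order of the difference can only \emph{enlarge} the class, never shrink it. The paper's own Lemma~2 gives $\Cam_p^{1,1}=\Lip_1$, which is precisely the class $|\Delta_h^1 f|\le C|h|$, so for $s=i=1$ the correct choice is $\alpha=1=i$; taking $\alpha=2$ instead would characterize the Zygmund class $Z=\Cam_p^{1,2}$, which strictly contains $\Lip_1$ (e.g.\ $x\log|x|\in Z\setminus\Lip_1$). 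The uniform choice that works for the whole range $0<s\le i$ is $\alpha=i$: then $\Delta_h^{\alpha}$ annihilates $\mathcal P_i$ (your ``easy'' direction), and the bound $|\Delta_h^{i}f|\le C|h|^s$ characterizes $\Cam_p^{s,i}$ both for $s<i$ (where any $\alpha$ with $s<\alpha\le i$ would also do, via Marchaud) and at $s=i$ (where $\alpha=i$ is the \emph{only} order that works). The phenomenon you had in mind is precisely what distinguishes $\Cam_p^{k,k}$ from $\Cam_p^{k,k+1}$ at integer smoothness $k$; the second index $i$ already records which difference order is being used, and the rule is simply $\alpha=i$. As written, your step~(i) (``least integer exceeding $s$'') and your step (iii) would prove a false identity in the borderline case.

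Apart from this, your Whitney-plus-Marchaud route is the standard one. For the record, the paper does not prove this lemma at all; it cites Campanato, Meyers, and Section~4.1.1 of Kislyakov--Kruglyak, which carry out essentially the argument you outline (local polynomial approximation controlled by $\omega_i$, Marchaud to move between difference orders, averaging to pass between pointwise and $L^p$ means). With the value of $\alpha$ corrected to $\alpha=i$, your sketch would be a faithful reconstruction of the cited proofs.
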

For example, the spaces~$\Cam_{p}^{1,2}$ coincide with the Zygmund class~$Z$ that is determined by the condition 
$|\Delta_h^{2} f(x)|\le C|h|$.
Details can be found in Section~4.1.1 of~\cite{KiKr}.

Finally, when $s<0$ we obtain the so-called Morrey spaces.

\paragraph{Maximal functions.} Here we introduce the maximal functions corresponding to the Morrey--Campanato norms.
Such maximal operators were studied thoroughly in~\cite{DeSh}.
\begin{Def}\label{DefOfM}
Suppose $s\in\mathbb{R}$, $1\le p< \infty$, and $i\in\mathbb{Z}_+$. 
Let $h = \{h_m\}$ be a collection (finite or countable) of measurable functions on $\mathbb{R}^n$.
We define the maximal function $M_{i,p,s}h$ by the formula
$$
	M_{i,p,s}h(x)=
	\sup_{Q \ni x}\inf_{P}\frac{1}{|Q|^{s/n}}\bigg(\frac{1}{|Q|}\int\limits_{Q}\big|h-P\big|_{l^2}^p\bigg)^{1/p},
$$
where the supremum is taken over all the cubes containing~$x$ and the infimum is taken over all the collections $P = \{P_m\}$
of polynomials $P_m \in \mathcal{P}_i$.
\end{Def}
We have presented this definition in the form that is slightly different from the classic one. Specifically, we do not assume that $h$ belongs to $L^1(l^2)$ and even that~$h$ and~$P$ are $l^2$-valued functions.
The point is that we are going to apply maximal operators in a context in which we do not know whether our sequence~$h$ is an $l^2$-valued function or not. However it turns out that if $M_{i,p,s}h$ is 
finite at least at one point, then $h$ can be ``corrected'' to an $l^2$-valued function. Namely, we can state the following simple fact.
%The point is that we will encounter sequences of functions about which we will not know whether they are $l^2$-valued functions or not.
\begin{Rem}\label{AlmostGood}
	Let $h = \{h_m\}$ be a countable collection of measurable functions on $\mathbb{R}^n$ such that 
	$M_{i,p,s}h(x_0)<\infty$ at some point~$x_0$. Then there exists a collection $P = \{P_m\}$ 
	of polynomials $P_m \in \mathcal{P}_i$ such that $h-P \in L_{\mathrm{loc}}^p(\mathbb{R}^n,l^2)$.\footnote{Once again, we emphasize that neither $h$ nor $P$ is assumed to be an $l^2$-valued function.}
\end{Rem}
\begin{proof} 
Consider some cube~$Q_1$ containing~$x_0$.
By Definition~\ref{DefOfM}, we can choose a collection~$P_1$ of polynomials such that $h-P_1 \in L^p(Q_1,l^2)$.
Also, consider another cube $Q_2$, $Q_2 \supset Q_1$, and a corresponding collection $P_2$ such that $h-P_2 \in L^p(Q_2,l^2)$. 
We have 
$$
	P_2-P_1 = (h-P_1)-(h-P_2)\in L^p(Q_1,l^2).
$$
Thus, $P_2-P_1\in \mathcal{P}_i(l^2)$ and 
$$
	h-P_1 = (h-P_2)+(P_2-P_1) \in L^p(Q_2,l^2).
$$ 
Therefore, we may set $P=P_1$.
\end{proof}
%\begin{Def}
%Suppose $s\in\mathbb{R}$, $1\le p\le \infty$, and $i\in\mathbb{Z}_+$. 
%For $f\in L^p_{\mathrm{loc}}(\mathbb{R}^n)$, we define the maximal function $M_{i,p,s}(f)$ by the formula
%$$
%	M_{i,p,s}(f)(x)=
%	\sup_{Q\ni x}\inf_{q\in\mathcal{P}_i}\frac{1}{|Q|^{s/n}}\bigg(\frac{1}{|Q|}\int\limits_{Q}|f-q|^p\bigg)^{1/p}.
%$$
%where $\mathcal{P}_i$ is the space of polynomials of degree strictly smaller than $i$ 
%(we set $\mathcal{P}_0 = \{0\}$).
%\end{Def}
In its turn, for a ``true'' function (for example, it can be a sequence corrected through the procedure described above) we can state the following proposition.
\begin{Le}\label{fIsSuit}
	Suppose $i\in\mathbb{Z}_+$\textup, $1\le p<\infty$\textup, and $s\in(-n/p,i]$.
	Let $\beta$ be a positive number such that $\beta > \max \{s,i-1\}$.
	Consider a measurable function~$f$ \textup(scalar-valued or $l^2$-valued\textup) on $\mathbb{R}^n$ such that $M_{i,p,s}f$ is finite at 
	least at one point. Then
	the function 
	$$
		|f(x)|(1+|x|^{n+\beta})^{-1}
	$$ 
	is integrable.
\end{Le}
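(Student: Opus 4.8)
The plan is to convert the single hypothesis $M_{i,p,s}f(x_0)<\infty$ into scale-by-scale control on dyadic cubes and then sum over dyadic annuli. By translation we may assume $x_0=0$ (the weight $1+|x|^{n+\beta}$ is comparable to $1+|x-x_0|^{n+\beta}$); put $A=M_{i,p,s}f(0)$. For $k\ge0$ let $Q_k$ be the cube centered at the origin of side $2^k$, so $|Q_k|=2^{kn}$, and set $A_0=Q_0$, $A_k=Q_k\setminus Q_{k-1}$ for $k\ge1$; on $A_k$ one has $1+|x|^{n+\beta}\gtrsim 2^{k(n+\beta)}$. Since $0\in Q_k$, Definition~\ref{DefOfM} lets us choose, for every $k$, a near-optimal $P_k\in\mathcal{P}_i$ (or $\mathcal{P}_i(l^2)$) with
$$
\Big(\int_{Q_k}|f-P_k|^p\Big)^{1/p}\le 2\,|Q_k|^{1/p+s/n}A=2^{\,k(s+n/p)+1}A .
$$
H\"older's inequality then already gives $\int_{Q_k}|f-P_k|\lesssim 2^{k(n+s)}A$; in particular $f\in L_{\mathrm{loc}}^1(\mathbb{R}^n)$, so that the asserted integrability makes sense, and it remains to estimate $\int_{Q_k}|P_k|$.

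First I would record two elementary facts about $R\in\mathcal{P}_i$ and concentric cubes $Q_j\subset Q_k$, with constants depending only on $n,i,p$:
$$
\sup_{Q_j}|R|\lesssim |Q_j|^{-1/p}\Big(\int_{Q_j}|R|^p\Big)^{1/p},\qquad
\sup_{Q_k}|R|\lesssim 2^{(k-j)(i-1)}\sup_{Q_j}|R| ,
$$
the first by equivalence of norms on the finite-dimensional space $\mathcal{P}_i$ together with rescaling, the second because a polynomial of degree $<i$ grows at most like $|x|^{i-1}$. In the $l^2$-valued case both remain valid with constants independent of the number of components, e.g.\ by applying Minkowski's inequality for the $\ell^2$-norm to the scalar reproducing-kernel representation of point evaluation on $\mathcal{P}_i$. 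Applying the first fact to $R=P_{j+1}-P_j$ and using
$$
\|P_{j+1}-P_j\|_{L^p(Q_j)}\le\|f-P_j\|_{L^p(Q_j)}+\|f-P_{j+1}\|_{L^p(Q_{j+1})}\lesssim 2^{j(s+n/p)}A
$$
yields $\sup_{Q_j}|P_{j+1}-P_j|\lesssim 2^{js}A$, hence $\sup_{Q_k}|P_{j+1}-P_j|\lesssim 2^{(k-j)(i-1)}2^{js}A$ for $j<k$.

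Next I would telescope $P_k=P_0+\sum_{j=0}^{k-1}(P_{j+1}-P_j)$ to get
$$
\sup_{Q_k}|P_k|\le\sup_{Q_k}|P_0|+\sum_{j=0}^{k-1}\sup_{Q_k}|P_{j+1}-P_j|\lesssim 2^{k(i-1)}\sup_{Q_0}|P_0|+2^{k(i-1)}A\sum_{j=0}^{k-1}2^{j(s-i+1)} ,
$$
and, distinguishing the cases $s\lessgtr i-1$ for the geometric sum, conclude $\sup_{Q_k}|P_k|\lesssim(1+k)\,2^{k\max\{s,i-1\}}$, with a constant also depending on $A$ and on $\sup_{Q_0}|P_0|<\infty$. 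Thus $\int_{Q_k}|P_k|\le|Q_k|\sup_{Q_k}|P_k|\lesssim(1+k)\,2^{k(n+\max\{s,i-1\})}$, and combining with the bound on $\int_{Q_k}|f-P_k|$,
$$
\int_{\mathbb{R}^n}\frac{|f(x)|}{1+|x|^{n+\beta}}\,dx=\sum_{k\ge0}\int_{A_k}\frac{|f|}{1+|x|^{n+\beta}}\lesssim\sum_{k\ge0}2^{-k(n+\beta)}\int_{Q_k}|f|\lesssim\sum_{k\ge0}\big(2^{k(s-\beta)}A+(1+k)\,2^{k(\max\{s,i-1\}-\beta)}\big),
$$
a convergent series precisely because $\beta>\max\{s,i-1\}$ (in particular $\beta>s$).

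I do not expect any single step to be hard; the main obstacle is the bookkeeping of exponents in the cross-scale polynomial estimates — arranging the powers of $2$ so that the geometric series close up exactly under the hypothesis on $\beta$ — and, secondarily, the uniform-in-components form of the polynomial inequalities in the $l^2$-valued setting. Note that polynomials genuinely enter (so one cannot hope to avoid the exponent $i-1$): for $f$ a polynomial of degree $i-1$ one has $M_{i,p,s}f\equiv0$, yet $\int|f|(1+|x|^{n+\beta})^{-1}<\infty$ forces $\beta>i-1$; similarly a function of exact order $|x|^s$ forces $\beta>s$, so the threshold $\max\{s,i-1\}$ is sharp.
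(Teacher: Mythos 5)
The paper gives no proof of this lemma, only a citation to Section~4.4.1 of~\cite{KiKr}, so there is nothing in the text to compare against line by line. Your argument is a correct, self-contained proof by the expected method: after centering at the good point, choose near-optimal $P_k\in\mathcal{P}_i$ on the dyadic chain $Q_k$, telescope $P_k-P_0$ through the differences $P_{j+1}-P_j$ using the two polynomial inequalities (your second one is essentially Lemma~\ref{LeFromBook} of the paper, stated there with the slightly wasteful exponent $i$ in place of $i-1$), and sum $\int_{Q_k}|f|\lesssim 2^{k(n+s)}+(1+k)\,2^{k(n+\max\{s,i-1\})}$ over dyadic annuli against the weight; the hypothesis $\beta>\max\{s,i-1\}$ is precisely the convergence threshold, as your sharpness examples correctly indicate, and your reproducing-kernel remark is the right way to keep the polynomial estimates uniform in the number of $l^2$-components. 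One trifling point to tidy: the factor $2$ in ``near-optimal'' presupposes $A=M_{i,p,s}f(x_0)>0$; if $A=0$, either take $P_k$ within $2^{-k}$ of the infimum, or note that $f$ then coincides a.e.\ with a single polynomial of $\mathcal{P}_i$, for which the conclusion is immediate from $\beta>i-1$.
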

This lemma was implicitly proved in Section~4.4.1 of~\cite{KiKr}.

Also we need a certain modification of $M_{i,p,s}f$.
Quite often it is convenient to introduce certain projections~$\mathfrak{P}_Q$ that act from $L^p(Q; \tfrac{dx}{|Q|})$ onto $\mathcal{P}_i$ and are bounded in $L^p(Q; \tfrac{dx}{|Q|})$ uniformly in~$Q$.
Such projections
allow us to choose polynomials at which the infimums in the definition of~$M_{i,p,s}f$ are attained roughly.
Namely, for a scalar-valued or $l^2$-valued function~$f\in L^1_{\mathrm{loc}}(\mathbb{R}^n)$, the function $M_{i,p,s}f$ is equivalent 
to the following expression:
\begin{equation}\label{DefOfM2}
	\widetilde M_{i,p,s}f(x) = \sup_Q\frac{1}{|Q|^{s/n}}\bigg(\frac{1}{|Q|}\int\limits_{Q}|f-\mathfrak{P}_Qf|^p\bigg)^{1/p},
\end{equation}
where the supremum is taken over all the cubes centered at $x$.
Details can be found in~\cite{DeSh} and also in Sections~1.2.2 and~4.4.1 of~\cite{KiKr}.

\section{Main result and its discussion}
First, we state a preliminary version of the main result. Namely, in order to avoid complications with the definition of Fourier multipliers, we additionally assume $f\in L^2(\mathbb{R})$.
\begin{Th}\label{MainT}
	Let $\Delta_m=[a_m,b_m]$ be pairwise disjoint intervals on $\mathbb{R}$ with 
	lengths $l_m = b_m - a_m$. Here the index~$m$ runs over some finite or countable set~$\mathcal{M}$.
	Suppose $0 \notin (a_m,b_m)$ for any $m\in\mathcal{M}$.
 	Consider two functions $\psi^1$ and $\psi^2$ in $C^{\infty}([0,1])$ such that 
 	\begin{gather*}
 		\supp{\psi^1}\subset \big[0, \tfrac{2}{3}\big],\quad \supp{\psi^2}\subset \big[\tfrac{1}{3}, 1\big], \\
 		\psi^1 + \psi^2 \equiv 1\quad \mbox{on}\quad [0,1].\rule{0pt}{20pt}
 	\end{gather*}
 	We extend the functions~$\psi^{\sigma}$\textup, $\sigma = 1,2$\textup, to the whole line by zero and introduce
 	their transformations supported on ${\Delta_m}$\textup: 
	$$
		\psi^{\sigma}_m(\xi) = \psi^{\sigma}\bigg(\frac{\xi-a_m}{l_m}\bigg).
	$$	
	Let $f\in L^2(\mathbb{R})$.
	We define two linear operators~$S^{\sigma}$ by the formulas
	\begin{equation}\label{SDef}
	\begin{aligned}
		&S^{1}f(x)=\Big\{ e^{-2\pi i\,a_m x}\big(\widehat f\psi^{1}_m\big)^{\vee}(x)\Big\}_{m\in\mathcal{M}}, \\
		&S^{2}f(x)=\Big\{ e^{-2\pi i\,b_m x}\big(\widehat f\psi^{2}_m\big)^{\vee}(x)\Big\}_{m\in\mathcal{M}}.\rule{0pt}{20pt}
	\end{aligned}
	\end{equation}
	Then we have
	$$
		M_{r,2,s}(S^{\sigma}f) \le C M_{i,2,s}f,
	$$
	provided $i \in \mathbb{Z}_{+}$\textup, $r\in{\mathbb{N}}$\textup, $s\in(-1/2,i]$\textup, 
	and $r>\max \{s,i-1\}$.\footnote{From now on, the letters $s$, $i$, and $r$ always denote parameters for which such relations are
	fulfilled, unless otherwise stated.}
	The constant~$C$ does not depend on $f$ or $\{\Delta_m\}$.	
\end{Th}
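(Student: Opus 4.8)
The plan is to establish the pointwise bound $M_{r,2,s}(S^\sigma f)(x_0)\le C\,M_{i,2,s}f(x_0)$ at an arbitrary point $x_0$ by the usual Calder\'on--Zygmund scheme for Morrey--Campanato spaces: reduce to a single cube, split the integral there into a local and a far part, and dispose of the two; only the far part is genuinely hard, and it is hard precisely in the way Rubio de Francia's inequality is. First I record the structure of $S^\sigma$. A change of variables gives $(S^1 f)_m=K^1_{l_m}*\bigl(e^{-2\pi i a_m(\cdot)}f\bigr)$ with $K^1_t(z)=t(\psi^1)^\vee(tz)$, and the analogous formula for $\sigma=2$ with $b_m$ in place of $a_m$; thus the spectrum of $(S^\sigma f)_m$ lies in $[0,\tfrac{2}{3}l_m]$ for $\sigma=1$ and in $[-\tfrac{2}{3}l_m,0]$ for $\sigma=2$, so every component is ``analytic'' (resp.\ ``antianalytic''). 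Because $\psi^1\equiv1$ near $0$ and $\psi^2\equiv1$ near $1$, and because $0\notin(a_m,b_m)$, the symbol $\psi^1_m$ vanishes identically near the origin whenever $a_m\neq0$ (and $\psi^2_m$ whenever $b_m\neq0$); hence these components annihilate all polynomials, and only the at most one exceptional index $m^\ast$ with $a_m=0$ (resp.\ $b_m=0$) survives, for which $f\mapsto(S^\sigma f)_{m^\ast}$ is, up to a dilation, a single Riesz-projection-type --- hence Calder\'on--Zygmund --- operator. Using the equivalence $M_{r,2,s}\asymp\widetilde M_{r,2,s}$ from \eqref{DefOfM2}, the translation covariance of $S^\sigma$ up to unimodular factors (harmless for $l^2$-norms and Campanato seminorms) and its dilation covariance (which only rescales the still-admissible family $\{\Delta_m\}$), it suffices to fix a cube $Q$ of side $\ell$ centered at $x_0$, to choose for each $k\ge0$ a polynomial $\Pi_k\in\mathcal P_i$ with $(2^k\ell)^{-s}\bigl((2^k\ell)^{-1}\int_{2^kQ}|f-\Pi_k|^2\bigr)^{1/2}\le 2\,M_{i,2,s}f(x_0)$, and to bound $\ell^{-s}\bigl(\ell^{-1}\int_Q|S^\sigma f-P|_{l^2}^2\bigr)^{1/2}$ for a suitable degree-$<r$ polynomial collection $P$. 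Since $f\in L^2$, each $(S^\sigma f)_m$ equals an absolutely convergent kernel integral $\int\mathcal K^\sigma_m(x,y)f(y)\,dy$; using that the non-exceptional components kill polynomials one rewrites it, modulo the exceptional index and a term $(S^\sigma((f-\Pi_0)\chi_{2Q}))_m$, as $\int_{(2Q)^c}\mathcal K^\sigma_m(x,y)\,(f(y)-\Pi_0(y))\,dy$.

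Two of the resulting pieces are routine. For the local piece $S^\sigma g$ with $g:=(f-\Pi_0)\chi_{2Q}\in L^2$, the pairwise disjointness of the supports of $\{\psi^\sigma_m\}_m$ and Plancherel give the uniform bound $\|S^\sigma\|_{L^2\to L^2(l^2)}\le1$, so $\ell^{-s}\bigl(\ell^{-1}\int_Q|S^\sigma g|_{l^2}^2\bigr)^{1/2}\le\ell^{-s}\bigl(\ell^{-1}\int_{2Q}|f-\Pi_0|^2\bigr)^{1/2}\lesssim M_{i,2,s}f(x_0)$, after comparing $\Pi_0$ with the near-optimal polynomial on $2Q$. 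The exceptional index $m^\ast$ contributes a single Calder\'on--Zygmund operator bounded on $\Cam_2^{s,i}$ by the classical theory; its pointwise contribution is controlled by the same $2Q$/$(2Q)^c$ split, the higher-order Calder\'on--Zygmund cancellation of its kernel absorbing the growth of $f-\Pi_k$ over dyadic annuli.

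The heart of the matter is the far contribution of the indices with $a_m\neq0$ (resp.\ $b_m\neq0$): one must bound $\ell^{-s}\bigl(\ell^{-1}\int_Q\sum_m\bigl|\int_{(2Q)^c}\mathcal K^\sigma_m(x,y)\,(f(y)-\Pi_0(y))\,dy\bigr|^2dx\bigr)^{1/2}$ by $C\,M_{i,2,s}f(x_0)$. Splitting $(2Q)^c$ into dyadic annuli $A_k\sim2^k\ell$ and, on $A_k$, passing from $\Pi_0$ to the nearer polynomial $\Pi_{k+1}$ (the usual telescoping, legitimate because these components kill polynomials) reduces the task to an \emph{off-diagonal, $l^2$-valued} estimate for $S^\sigma$: the $L^2(Q;l^2)$-norm of $\bigl\{\int_{A_k}\mathcal K^\sigma_m(\cdot,y)\,h(y)\,dy\bigr\}_m$ must be dominated by a quantity which, paired with the growth $\|f-\Pi_{k+1}\|_{L^2(A_k)}\lesssim2^{k(1/2+s)}\ell^{1/2+s}M_{i,2,s}f(x_0)$, is summable in $k$. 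Here term-by-term kernel bounds are worthless: the pointwise $l^2$-norm of $\{\mathcal K^\sigma_m\}_m$ is typically infinite --- already for the unit intervals $[n,n+1]$, $n\ge0$ --- and $\sum_m l_m(1+l_m|z|)^{-N}$ is unbounded, so no $l^2$-valued size or H\"ormander condition is available in the naive sense; the off-diagonal decay can be seen only by exploiting the orthogonality supplied by the disjointness of the spectra \emph{together with} the one-sidedness of each $\operatorname{spec}(S^\sigma f)_m$ about the origin --- which is exactly what ``$0\notin(a_m,b_m)$'' and the one-sided smoothing of $\chi_{\Delta_m}$ are for. This estimate is essentially equivalent in strength to Rubio de Francia's inequality, and I would obtain it by transplanting the method of Rubio de Francia and of Kislyakov--Parilov to the present setting: decompose each $\Delta_m$, relative to a dyadic grid, into a ``lacunary part'', disposed of by classical (smooth) Littlewood--Paley theory on $\Cam_2^{s,i}$, and a ``part confined to a single dyadic interval'', which after affine normalization is an instance of the same operator at unit scale and is absorbed by iteration with a summable loss, the output remaining in $l^2$ by orthogonality throughout and the polynomial cancellation supplying the extra $k$-decay needed when $s\ge0$. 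I expect this off-diagonal $l^2$-estimate to be the only real obstacle; combining it with the two routine pieces and taking the supremum over cubes through $x_0$ gives the asserted inequality.
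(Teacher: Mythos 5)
Your framework (fix a cube, exploit translation/dilation covariance and the polynomial cancellation of the non-exceptional components, split into a local and a far part) is the right starting point, and you correctly diagnose the central danger: the pointwise $l^2$-norm of the kernel family $\{\mathcal K^\sigma_m\}_m$ need not decay, so no naive size or H\"ormander condition is available and the whole weight of the theorem rests on the off-diagonal $l^2$-valued estimate. But at precisely that point the proposal stops being a proof. The sentence beginning ``I would obtain it by transplanting the method of Rubio de Francia and of Kislyakov--Parilov \dots'' contains the entire content of the theorem and is never carried out. Worse, the plan as sketched is based on a misconception: you say the ``part confined to a single dyadic interval \dots after affine normalization is an instance of the same operator at unit scale and is absorbed by iteration with a summable loss.'' That is not what happens in the Rubio de Francia reduction, and iteration of the original operator against itself is not how the gain is produced. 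After the lacunary (Littlewood--Paley) cut and the dyadic rounding, the residual operator is \emph{not} self-similar to $S^\sigma$; it degenerates into a much more rigid object --- a Rubio de Francia operator whose kernel family $\{\phi_k(t-y)e^{-2\pi i j 2^k y}\}_{(k,j)}$ is built from a single Schwartz profile over a regular grid of intervals --- and it is only for this rigid object that a usable $l^2$-valued smoothness estimate (the paper's Lemma~\ref{Smooth3}, with the explicit exponent $B_r=r+\tfrac12$) can be proved. No ``iteration with summable loss'' appears anywhere; the gain comes from the summability in $\sigma$ of $2^{-\sigma B_r}$ against the polynomial growth $2^{\sigma(i-1)}\sum_{\alpha\le\sigma}2^{\alpha(s-i+1)}$.

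What the paper actually does, and what your proposal omits, is to \emph{factor} $S^\sigma$ through three explicit operators, and to prove a CZ-type kernel smoothness estimate and polynomial-cancellation/preservation for each one separately: the merging operator $R$ built from the Littlewood--Paley pieces $\theta_v(x-\tau)$ with the compensating phases $e^{2\pi i\delta_{m,v}\tau}$ (Lemma~\ref{Smooth1}, Facts~\ref{RPIsZero}, \ref{RPIsP}), the smoothing convolution $\boldsymbol\Phi$ that corrects the right end of the spectrum so that the identity~\eqref{Decomp} holds (Lemma~\ref{Smooth2}), and the Rubio de Francia operator $H$ (Lemma~\ref{Smooth3}, Fact~\ref{HPIsZero}). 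Each factor is then pushed through Campanato spaces by the maximal-function argument of Lemma~\ref{HIsBounded}. Crucially, this factorization is arranged so that the shifts that trivialized the $L^p$ story (and are implicitly dropped in your ``up to unimodular factors'' remark) are placed where Campanato seminorms can tolerate them: the small intervals $J_{k_{m,v},j_{m,v}}$ are shifted to the origin inside $H$, and the phases $e^{2\pi i\delta_{m,v}\tau}$ in $R$ put them back. Since Morrey--Campanato seminorms are not shift-invariant, this bookkeeping cannot be waved away the way it is in the $L^p$ argument. In short: your proposal correctly locates the difficulty, but it neither proves the decisive estimate nor identifies the decomposition into $R$, $\boldsymbol\Phi$, $H$ that makes the estimate provable; and the ``self-similar iteration'' device you substitute for it does not exist in this setting.
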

By discussion in the previous section, this theorem implies the facts described at the end of Section~\ref{Hist}. However, in general
we do not assume that the function $M_{i,2,s}f$ is finite almost everywhere. In particular, the inequality is true even if this function is finite at only one point, implying a certain
smoothness of~$S^{\sigma}f$ at the same point. A similar remark applies to Theorem~\ref{ExMainT} below, where we lift the restriction $f \in L^2$. Unfortunately, this leads to a more bulky statement.

Suppose $f$ is a function such that $M_{i,p,s}f$ is finite at some point, and $\varphi$ is a function in $\mathcal{S}$. Then
Lemma~\ref{fIsSuit} immediately implies the following: the Fourier transform of~$f$ is well defined, $f\ast\varphi \in C^{\infty}$, 
$f\ast\varphi \in \mathcal{S}'$, and 
$\widehat {f\ast\varphi} = \widehat{f}\,\widehat{\varphi}$. This allows us to state the final version of the main result. 
%We present it only for the multipliers~$\psi^{1}_m$. The symmetric theorem for~$\psi^{2}_m$ is 
\begin{Th}\label{ExMainT}
	Let $\Delta_m$ and $\psi^{\sigma}_m$\textup, $\sigma=1,2$\textup, be the intervals and the functions introduced in Theorem~\textup{\ref{MainT}}.
	Suppose $f$ is a measurable function on $\mathbb{R}$ such that $M_{i,2,s}f$ is finite at some point. Then for each $m$\textup, there exist 
	two sequences of functions $\psi^{\sigma}_{m,\nu}\in\mathcal{S}$ \textup(their choice does not depend on~$f$\textup) and two sequences of polynomials 
	$p^{\sigma}_{m,\nu} \in \mathcal{P}_r$ \textup(depending on $f$\textup)\footnote{Here we do not assume that $\{p^{\sigma}_{m,\nu}\}_{m\in\mathcal{M}} \in \mathcal{P}_r(l^2)$.} such that
	\begin{enumerate}
	\item[\textup{1)}] the sequences~$\psi^{\sigma}_{m,\nu}$ converge to the functions~$\psi^{\sigma}_m$ in $L^2(\mathbb{R})$ as $\nu\to \infty$\textup;
	\item[\textup{2)}] 
%		the sequences 
%		$$
%			e^{-2\pi i\,a_m x}\big(\widehat f\psi^1_{m,\nu}\big)^{\vee}(x) - p^1_{m,\nu}(x)
%		$$ 
%		converge to some functions 
%		$g^1_m \in L_{\mathrm{loc}}^2(\mathbb{R})$ in $L^2(I)$ for any interval~$I$, and the sequences 
%		$$
%			e^{-2\pi i\,b_m x}\big(\widehat f\psi^2_{m,\nu}\big)^{\vee}(x) - p^2_{m,\nu}(x)
%		$$ 
%		converge 
%		to functions~$g^2_m$ in the
%		same sense;
		there exist two functions $g^\sigma = \{g^{\sigma}_m\}_{m\in\mathcal{M}}$ in $L_{\mathrm{loc}}^2(\mathbb{R},l^2)$ such that
		\begin{align*}
			&\Big\{e^{-2\pi i\,a_m x}\big(\widehat f\psi^1_{m,\nu}\big)^{\vee}(x) - p^1_{m,\nu}(x)\Big\}_{m\in\mathcal{M}}\to g^1, \\ 
			&\Big\{e^{-2\pi i\,b_m x}\big(\widehat f\psi^2_{m,\nu}\big)^{\vee}(x) - p^2_{m,\nu}(x)\Big\}_{m\in\mathcal{M}}\to g^2
%			\quad\mbox{as}\quad \nu\to -\infty
		\end{align*}
		as $\nu\to \infty$\textup, where the limits can be taken in $L^2(I,l^2)$ for any interval~$I$\textup{;}
	\item[\textup{3)}] putting 
	$$
		\widetilde S^\sigma f  = g^\sigma,
	$$ 
	we have 
	$$
		M_{r,2,s}\big(\widetilde S^\sigma f\big) \le C M_{i,2,s}f,
	$$ 
	where the constant~$C$ does not depend on $f$ or $\{\Delta_m\}$.
	\end{enumerate}
\end{Th}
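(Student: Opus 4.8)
The plan is to reduce everything to Theorem~\ref{MainT} (which treats $f\in L^2$) by two devices: (a)~replacing each non-smooth multiplier~$\psi^\sigma_m$ — it has a single jump, at $a_m$ for $\sigma=1$ and at $b_m$ for $\sigma=2$ — by Schwartz approximants, so that the operators become meaningful on the polynomially growing~$f$; and (b)~splitting~$f$, near any fixed bounded region, into an $L^2$-part treated by Theorem~\ref{MainT} and a ``far'' part treated by kernel decay plus a Taylor correction. Fix an even $\rho\in C_0^\infty(\mathbb{R})$ with $\int\rho=1$ and put $\psi^\sigma_{m,\nu}:=\psi^\sigma_m*\rho_{\epsilon_m/\nu}\in\mathcal{S}$, where $\epsilon_m>0$ is chosen so small that $\supp\psi^\sigma_{m,\nu}$ stays bounded away from~$0$ for all $\nu\ge1$ whenever $\supp\psi^\sigma_m$ is — which fails only for the at most one index~$m$ with $a_m=0$ (case $\sigma=1$) or $b_m=0$ (case $\sigma=2$). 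Then $\psi^\sigma_{m,\nu}\to\psi^\sigma_m$ in $L^2(\mathbb{R})$ (property~1) and, using the disjointness of the $\Delta_m$ and the refined bound $|\psi^\sigma_{m,\nu}|\lesssim\min(1,\nu l_m)$, one has $\sum_m|\psi^\sigma_{m,\nu}|^2\lesssim1$ uniformly in~$\nu$. Since $M_{i,2,s}f$ is finite somewhere, Lemma~\ref{fIsSuit} gives $f$ at most polynomial growth in the weighted-$L^1$ sense, so $\widehat f\in\mathcal{S}'$ and, as recorded before the theorem, $f*\varphi\in C^\infty$ with $\widehat{f*\varphi}=\widehat f\,\widehat\varphi$ for $\varphi\in\mathcal{S}$. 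Hence $h^\sigma_{m,\nu}(x):=e^{-2\pi i\,c^\sigma_m x}\big(\widehat f\,\psi^\sigma_{m,\nu}\big)^\vee(x)$, with $c^1_m=a_m$ and $c^2_m=b_m$, is a well-defined $C^\infty$ function whose spectrum lies in an interval of length $\lesssim l_m$ having $0$ as an endpoint.

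Now fix a bounded interval~$I$, a fixed large dilate $J=\lambda I$ (enlarged if necessary to contain a point where $M_{i,2,s}f$ is finite), and, by the definition of $M_{i,2,s}$, a polynomial $P\in\mathcal{P}_i$ with $f-P\in L^2(J)$ and $\|f-P\|_{L^2(J)}\lesssim|J|^{1/2+s}M_{i,2,s}f$. Write $f=g+b$, $g:=(f-P)\chi_J\in L^2(\mathbb{R})$, $b:=P+(f-P)\chi_{J^c}$, so $h^\sigma_{m,\nu}=h^{\sigma,g}_{m,\nu}+h^{\sigma,b}_{m,\nu}$. For the $L^2$-part, Plancherel and the uniform bound $\sum_m|\psi^\sigma_{m,\nu}|^2\lesssim1$ give $\{h^{\sigma,g}_{m,\nu}\}_m\to S^\sigma g$ in $L^2(\mathbb{R},l^2)$ as $\nu\to\infty$, $S^\sigma$ being the operator of Theorem~\ref{MainT}; by that theorem $M_{r,2,s}(S^\sigma g)\le C\,M_{i,2,s}g$, and a routine Campanato-localization estimate (subtracting polynomials from $\mathcal{P}_i$ on cubes inside $J$, and, for cubes not contained in $J$, using $s>-1/2$ together with the bound on $\|f-P\|_{L^2(J)}$) gives $M_{i,2,s}g(x)\le C'M_{i,2,s}f(x)$ for $x\in I$. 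For the ``far'' part: in $b$, the summand $P$ contributes $0$ to $h^{\sigma,b}_{m,\nu}$ for every~$m$ with $\supp\psi^\sigma_{m,\nu}$ avoiding~$0$ (then $\widehat P\,\psi^\sigma_{m,\nu}=0$), while for the at most one exceptional index its contribution is a polynomial in $\mathcal{P}_i\subseteq\mathcal{P}_r$ with $c^\sigma_m=0$, which we place into~$p^\sigma_{m,\nu}$; the remaining term is $e^{-2\pi i\,c^\sigma_m x}\big([(f-P)\chi_{J^c}]*(\psi^\sigma_{m,\nu})^\vee\big)(x)$, whose kernel in~$x$, once the modulation has been absorbed into the density, is of Hilbert type — $\sim1/|x-z|$ at large distances, uniformly in~$\nu$, because of the jump of~$\psi^\sigma$. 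Evaluated on~$I$, at distance $\gtrsim(\lambda-1)|I|$ from~$J^c$, and after subtracting the degree-$<r$ Taylor polynomial at the center of~$I$ — which improves the kernel to $\sim|I|^r|x-z|^{-(r+1)}$ — the weighted-$L^1$ bound of Lemma~\ref{fIsSuit} and the hypothesis $r>\max\{s,i-1\}$ show that these remainders, collectively in~$m$, form an $l^2$-valued function on~$I$ with $L^2(I,l^2)$-norm $\lesssim|I|^{1/2+s}M_{i,2,s}f$, and they converge in $L^2(I,l^2)$ as $\nu\to\infty$ by dominated convergence (the spectral localization handling the sum over~$m$). These ``far'' estimates are precisely the ones already carried out in the proof of Theorem~\ref{MainT}.

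Assembling, with $p^\sigma_{m,\nu}$ the exceptional-polynomial term plus the Taylor polynomial just described, $\{h^\sigma_{m,\nu}-p^\sigma_{m,\nu}\}_m$ converges in $L^2(I,l^2)$ to $g^\sigma|_I:=S^\sigma g|_I+(\text{limit of the far remainders})$. Running this over an exhaustion $I_k\uparrow\mathbb{R}$ and reconciling the correction polynomials across the~$I_k$ by an inductive/diagonal adjustment — just as in the proof of Fact~\ref{AlmostGood}, since the difference of two admissible collections of polynomials automatically lies in $\mathcal{P}_r(l^2)$ — produces a single sequence $p^\sigma_{m,\nu}$ and a single $g^\sigma\in L^2_{\mathrm{loc}}(\mathbb{R},l^2)$ with the convergence required in property~2; moreover, the limit does not depend, modulo $\mathcal{P}_r(l^2)$, on the chosen splitting $f=g+b$. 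For property~3, fix~$x$ with $M_{i,2,s}f(x)<\infty$ (otherwise there is nothing to prove), and, for each cube $Q\ni x$, rerun the construction above with $I:=Q$, $J:=\lambda_0 Q$ (a fixed large~$\lambda_0$), and the auxiliary point taken to be~$x$ itself; this exhibits $g^\sigma$ on~$Q$ as $S^\sigma g_Q+(\text{far remainder})+(\text{a }\mathcal{P}_r(l^2)\text{-polynomial})$, so by the subadditivity of $M_{r,2,s}$ (the $L^2$-triangle inequality, allowing polynomial shifts) and the two bounds $M_{r,2,s}(S^\sigma g_Q)(x)\le C\,M_{i,2,s}g_Q(x)\lesssim M_{i,2,s}f(x)$ and the $L^2(Q,l^2)$-bound on the far remainder, we obtain $M_{r,2,s}(g^\sigma)(x)\le C\,M_{i,2,s}f(x)$.

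The main obstacle is twofold. First, one must show that the corrected sequences genuinely converge and that the limit~$g^\sigma$ is well defined modulo $\mathcal{P}_r(l^2)$ independently of the auxiliary splitting — the bookkeeping of the polynomial corrections, both as $\nu\to\infty$ and across scales; the underlying mechanism is the same self-improvement as in Fact~\ref{AlmostGood} (admissible polynomials for nested cubes differ by an element of $\mathcal{P}_r(l^2)$). Second, the ``far'' estimate must be carried out uniformly in~$m$ and~$\nu$ despite the modulations $e^{-2\pi i\,c^\sigma_m x}$ and the merely $1/|x-z|$-type decay of the kernels; here the interplay of the spectral localization of $h^\sigma_{m,\nu}$, the disjointness of the~$\Delta_m$, the Taylor subtraction, and the hypothesis $r>\max\{s,i-1\}$ is essential. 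Both points are, in substance, already contained in the proof of Theorem~\ref{MainT}; the genuinely new content of Theorem~\ref{ExMainT} is organizing them around the limiting procedure.
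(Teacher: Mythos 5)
Your proposal takes a genuinely different route from the paper, but it has a circularity and, more seriously, a gap in the ``far'' estimate that the paper's machinery is specifically designed to fill.

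\paragraph{Circularity.} You reduce Theorem~\ref{ExMainT} to Theorem~\ref{MainT}. In the paper the logical arrow points the other way: Theorem~\ref{MainT} is stated first only for motivation, and the paper explicitly notes that ``Theorem~\ref{ExMainT} immediately implies Theorem~\ref{MainT}''; the actual proof is of Theorem~\ref{ExMainT}, via the decomposition $S^{\sigma}\approx\widetilde R\circ\boldsymbol\Phi\circ H$ and Lemmas~\ref{RIsBounded}, \ref{PhiIsBounded}, \ref{HIsBounded}. No independent proof of Theorem~\ref{MainT} is ever given, and the case $f\in L^2$ is not genuinely easier: all the hard work (the three auxiliary operators and their kernel-smoothness lemmas) is still required. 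So using Theorem~\ref{MainT} as a black box does not reduce the problem to something already proved.

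\paragraph{The ``far'' estimate does not follow from naive $1/|x-z|$ kernel decay.} You write that, after absorbing the modulation into the density, the kernel of $S^{\sigma}$ is ``of Hilbert type, $\sim 1/|x-z|$''. As an $l^2$-valued kernel this is false in the sense needed for a Calder\'on--Zygmund localization argument. Writing $K_m(x,y)=e^{-2\pi i\,a_m y}\,\phi_m(x-y)$ with $\phi_m(t)=l_m(\psi^1)^{\vee}(l_m t)$, the modulations $e^{-2\pi i\,a_m y}$ are indexed by $m$ with frequencies $a_m$ that are essentially arbitrary; the quantity $|K(x,y)-K(x,y')|_{l^2}$ therefore need not decay with $|y-y'|$ at all, because for large $a_m$ the factor $e^{-2\pi i\,a_m y}-e^{-2\pi i\,a_m y'}$ oscillates wildly. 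Even the softer estimate you actually need — that after subtracting a degree-$<r$ Taylor polynomial in~$x$, the remainder has $l^2$-norm $\lesssim |I|^r/|x-y|^{r+1}$ — fails when many distinct scales $l_m$ are present: $\big(\sum_m\min(l_m^{r+1},|x-y|^{-(r+1)})^2\big)^{1/2}$ picks up at least a logarithmic loss, and the worse term coming from the modulations cannot be tamed pointwise. The paper's way out is precisely the decomposition: replace $\{\Delta_m\}$ by the regular intervals $J_{k,j}$ with arithmetic left endpoints $j2^k$, so that in the Rubio de Francia operator $H$ the modulations $e^{-2\pi i\,j2^ky}$ become, for each fixed $k$, an orthogonal system; Lemma~\ref{Smooth3} then estimates $\langle\kappa(t,\cdot)-q_I(t,\cdot),\xi\rangle$ in $L^2(I_\sigma)$ via Riesz--Fischer, \emph{not} the pointwise $l^2$-norm of $\kappa-q_I$. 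The remaining factors $R$ and $\boldsymbol\Phi$ are the ones for which simple Taylor estimates (Lemmas~\ref{Smooth1}, \ref{Smooth2}) suffice. Your proposal never mentions this decomposition, and the assertion that the far estimates ``are precisely the ones already carried out in the proof of Theorem~\ref{MainT}'' is where the real content of the theorem is silently invoked rather than proved.

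\paragraph{Minor points.} Your choice $\psi^\sigma_{m,\nu}=\psi^\sigma_m*\rho_{\epsilon_m/\nu}$ differs from the paper's truncation $\psi^1_{m,\nu}(\xi)=\psi^1_m(\xi)\sum_{v=\nu}^{N_m}\widehat\theta_v(\xi-a_m)$; since the theorem only asserts existence of some approximating sequence this is not in itself an error, but the paper's truncation is tailored so that identity~\eqref{Decomp} holds exactly term by term (via Fact~\ref{RPIsZero}), which is what lets the polynomial corrections $p^\sigma_{m,\nu}$ be tracked cleanly through the merging operator $\widetilde R$. Your bookkeeping of the corrections ``across $\nu$ and across scales'' is plausible in outline but would need the same kind of careful treatment the paper gives in the passage following Lemma~\ref{RIsBounded}, where $W_{\alpha,\tilde h}^\nu+P_\alpha^\nu$ is identified with the truncated $S^\sigma$ modulo explicit polynomials.

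In short: the argument is not wrong in spirit — a Calder\'on--Zygmund-style localization is indeed what is happening — but it is circular as stated and, more importantly, it hides the single genuinely nontrivial step (the vector-valued kernel smoothness in the presence of arbitrary modulations) behind a false claim of $1/|x-z|$ decay. That step is exactly what the Rubio de Francia decomposition plus Lemma~\ref{Smooth3} are for.
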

It is not hard to see that Theorem~\ref{ExMainT} immediately implies Theorem~\ref{MainT}.

And again, the above local estimates for the maximal functions yield the corresponding norm estimates.
\begin{Cor}
	For $f \in \Cam_{2}^{s,i}(\mathbb{R})$\textup, we have
	$$
		\|\widetilde S^\sigma f\|_{r,2,s} \le C\|f\|_{i,2,s},\quad \sigma=1,2,
	$$
	where the constant~$C$ does not depend on $f$ or $\{\Delta_m\}$.
\end{Cor}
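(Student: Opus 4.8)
The plan is to read the corollary off Theorem~\ref{ExMainT}, using the standard identification of the Morrey--Campanato seminorm with the $L^\infty$-norm of the corresponding maximal function. First I would record the elementary identity
$$
	\|g\|_{j,2,s} = \sup_{x\in\mathbb{R}} M_{j,2,s}g(x)
$$
valid for any locally integrable~$g$ on~$\mathbb{R}$ (scalar-valued or $l^2$-valued) and any admissible triple $(j,2,s)$ (in our application $j=i$ or $j=r$). Indeed, the supremum over all cubes in the definition of $\|g\|_{j,2,s}$ may be rewritten as $\sup_x\sup_{Q\ni x}$, and in the scalar case the inner expression is literally $M_{j,2,s}g(x)$. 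In the $l^2$-valued case there is a nominal discrepancy, since the competitors for $\|g\|_{j,2,s}$ are the $l^2$-valued polynomials $\mathcal{P}_j(l^2)$ whereas those for $M_{j,2,s}$ are arbitrary collections $\{P_m\}$, $P_m\in\mathcal{P}_j$; but on each cube~$Q$ the collection minimizing the $L^2(Q;l^2)$-deviation is the orthogonal projection of~$g$ onto~$\mathcal{P}_j$ taken componentwise, and this automatically belongs to $\mathcal{P}_j(l^2)$ whenever $g$ is a genuine $l^2$-valued function, so the two infima coincide. (This is the same mechanism behind Fact~\ref{AlmostGood} and the equivalence $M_{j,2,s}\asymp\widetilde M_{j,2,s}$ recalled in Section~\ref{Prelim}.)

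Then, given $f\in\Cam_2^{s,i}(\mathbb{R})$, I would note that $\|f\|_{i,2,s}<\infty$, hence $M_{i,2,s}f(x)\le\|f\|_{i,2,s}<\infty$ for every~$x$; in particular $M_{i,2,s}f$ is finite at some point, so Theorem~\ref{ExMainT} applies (no membership in $L^2$ is needed). It supplies the $l^2$-valued function $\widetilde S^\sigma f = g^\sigma\in L^2_{\mathrm{loc}}(\mathbb{R},l^2)$ together with the pointwise bound $M_{r,2,s}(\widetilde S^\sigma f)(x)\le C\,M_{i,2,s}f(x)$ for all~$x$. Taking the supremum over~$x$ and applying the identity of the previous paragraph, with $j=r$ on the left and $j=i$ on the right, gives
$$
	\|\widetilde S^\sigma f\|_{r,2,s} = \sup_{x\in\mathbb{R}} M_{r,2,s}(\widetilde S^\sigma f)(x) \le C\sup_{x\in\mathbb{R}} M_{i,2,s}f(x) = C\|f\|_{i,2,s},
$$
with the same $C$ as in Theorem~\ref{ExMainT}, which is exactly the claim.

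I do not expect any real obstacle here: the entire analytic content is already in Theorem~\ref{ExMainT}, and what remains is pure bookkeeping. The only point deserving a moment's care is the $l^2$-valued case of the identity $\|\cdot\|_{r,2,s}=\sup_x M_{r,2,s}(\cdot)$ --- that is, checking that for the genuine $l^2$-valued function $\widetilde S^\sigma f$ the infimum over arbitrary polynomial collections agrees with the infimum over $\mathcal{P}_r(l^2)$ --- which, as indicated above, is immediate from the Hilbert-space structure of $L^2(Q;l^2)$.
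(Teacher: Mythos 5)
Your proof is correct and follows the same route the paper implicitly intends: taking the supremum over base points in the pointwise maximal-function estimate of Theorem~\ref{ExMainT} to recover the Morrey--Campanato seminorm bound. Your extra remark about reconciling the infimum over arbitrary polynomial collections in $M_{r,2,s}$ with the infimum over $\mathcal{P}_r(l^2)$ in the seminorm is a legitimate point of care, and your resolution (componentwise $L^2$-projection lands in $\mathcal{P}_r(l^2)$ once $\widetilde S^\sigma f\in L^2_{\mathrm{loc}}(\mathbb{R},l^2)$) is correct.
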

In its turn, this proposition implies that the operators~$\widetilde S^\sigma$ are bounded from $\BMO$ to $\BMO$, 
from $\Lip_s$ to $\Lip_s$ for $0<s<1$, from $\Lip_1$ to $Z$, and from $Z$ to $Z$.

\paragraph{Comparison with Rubio de Francia's considerations.}
It is worth noting that the proof of~\eqref{RFEstimate} in~\cite{Ru} involved a certain $\BMO$-es\-ti\-mate which, however, was weaker than the above results for $s=0$ (but it was sufficient for Rubio de Francia's goals).
To be more specific, we describe some details.
First, Rubio de Francia proved that instead of the operator $Qf = \{M_{\Delta_m}f\}_{m\in\mathcal{M}}$, a more ``regular'' operator~$H$ may be considered (we also encounter such operators in Section~\ref{Conc}, see formula~\eqref{DefOfH}). 
Roughly speaking, he proved that 
$$
	\|Qf\|_{L^p(l^2)}\le C_p\|Hf\|_{L^p(l^2)},\quad 2\le p<\infty.
$$ 
After that, he got the estimate 
\begin{equation}\label{RFIntermEst}
	M_{1,1,0}(|Hf|_{l^2}) \le CM_2f,
\end{equation}
where $M_2f = (M(f^2))^{1/2}$ and $M$ is the usual Hardy--Littlwood maximal function. In fact, he estimated the expression $M_{1,1,0}(Hf)$, but employed the rough estimate~\eqref{RFIntermEst}.
The operator~$M_{1,1,0}$ is exactly the sharp maximal operator $(\cdot)^{\sharp}$, and it is well known that $\|g\|_{L^p}\le C\|g^{\sharp}\|_{L^p}$ (see~\cite{FeSt, St}).
Putting it all together, Rubio de Francia achieved the desired $L^p$-estimate. We can identify two main differences between the considerations just described and our arguments below.
First, we must reduce the operators~$S^{1,2}$ to the ``good'' operator~$H$ in $\BMO$ or in Lipschitz spaces, not in $L^p$-spaces (more precisely, we deal with the corresponding maximal functions).
And second, we prove estimates of the form $M_{r,2,s}(Hf) \le CM_{i,2,s}f$ instead of~\eqref{RFIntermEst}.

\paragraph{Concerning shifts.} 
Note that multiplying each expression $M_{\Delta_m}f$ in~\eqref{RFEstimate} by the factors $e^{-2\pi i\,a_m x}$ or $e^{-2\pi i\,b_m x}$, we can make Rubio de Francia's estimate look more similar to
the claims of Theorems~\ref{MainT} and~\ref{ExMainT}. In fact, those factors arose in the proof of~\eqref{RFEstimate} (in the ``good'' operator~$H$ mentioned above), but they were dropped after taking absolute values. 
%On the other hand, we cannot drop this factors in our theorems. 
On the other hand, we cannot lift them in
Theorems~\ref{MainT} and~\ref{ExMainT}, which deal with smoothness. The theorems say that the resulting $l^2$-valued functions become smooth if their components are shifted (in the frequency range) precisely in this way; it is impossible to shift them back 
because of an uncontrollable oscilation of the above factors. 

However, in the proofs below we do shift smooth functions in the frequency range. Forestalling natural questions, we present a simple example showing how this operation may become innocent (this is not a pattern for what follows, we use heavier technicalities
when we prove the main results). Namely, let $\varphi$ be a Schwartz class function. If $|f(s)-f(t)|\le|s-t|$ for $s,t\in\mathbb{R}$, then for every $a \notin \supp \widehat\varphi$ the function $g = \varphi\ast (e^{2\pi i\,a x} f(x))$ satisfies
$|g(s)-g(t)|\le C|s-t|$ with $C$ independent of~$a$.

To prove this, we recall that $|\varphi'(\tau)|\le \frac{C_N}{1+|\tau|^N}$ for every $N = 1,2,\dots$. Next,
\begin{align*}
	g(s)-g(t) &= \int\limits_{\mathbb{R}}\big[\big(\varphi(s-x)-\varphi(t-x)\big)e^{2\pi i\,a x}\big]f(x) \,dx\\
	&= \int\limits_{\mathbb{R}}\big[\big(\varphi(s-x)-\varphi(t-x)\big)e^{2\pi i\,a x}\big]\big(f(x)-f(t)\big) \,dx,
\end{align*}
because the function in square brackets has zero integral by assumption. Now, we split the integral into the sum of integrals over the set 
$$
	E = \{x\in\mathbb{R}\colon |x-t|\le 2|t-s|\}
$$ 
and its complement. Clearly, the first integral is bounded by
$$
	\int\limits_E \big(|\varphi(s-x)|+|\varphi(t-x)|\big)\,2|t-s|\,dx \le 4|t-s|\int\limits_{\mathbb{R}} |\varphi(\tau)|\,d\tau.
$$
In the second, we estimate the difference $|\varphi(s-x)-\varphi(t-x)|$ by using the Lagrange formula, which yields the bound
$$
	\frac{C|t-s|}{1+|x-t|^N}|x-t|
$$
for the integrand. If $N>2$, then the integral $\int_{\mathbb{R}}\frac{|x-t|\,dx}{1+|x-t|^N}$ converges and does not depend on~$t$, and we are done.

\section{Decomposition of $S^\sigma$}
We prove Theorem~\ref{ExMainT} for the multipliers~$\psi^{1}_{m}$ only. The proof of the other half is symmetric.

We represent the operator in question as a composition of certain auxiliary operators. One of them is an operator of Rubio de Francia type:
instead of cutting~$\widehat f$ into the pieces corresponding to $\Delta_m$, it cuts out smaller pieces that correspond to a more regular partitition. 
Another operator merges these small pieces in such a way that the intervals $\Delta_m$ are formed.
A similar technique was employed in all the previous publications (see~\cite{Ki, KiPa, Ru}), but in their settings it was allowed to add and remove any shifts $e^{-2\pi i\,a x}$ at will, because the $L^p$-norms are shift invariant. We cannot say the same about
the Morrey--Campanato norms or the corresponding maximal functions. So our arguments below are subtler: we have to treat shifts more accurately.

We introduce some objects that are required for our proof.
Let $A$, ${A>1}$, be a number sufficiently close to~$1$. We choose a function~$\theta$ such that 
$\supp\widehat\theta\subset [A^{-1}, A]$ and
$$
	\sum_{v\in\mathbb{Z}} \widehat\theta(\xi/A^v) \equiv 1
$$
on $(0,+\infty)$. 
By $\theta_v$ we denote the functions such that $\widehat\theta_v(\xi) = \widehat\theta(\xi/A^v)$, i.e.
$\theta_v(t)= A^v\theta(A^v t)$, and by $\mathfrak{a}_v$ we denote the intervals $[A^{v-1},A^{v+1}]$. 
Note that $\supp\widehat\theta_v\subset \mathfrak{a}_v$

Using the partition of unity introduced above, we build the sequence~$\psi^1_{m,\nu}$. Recall that $l_m$ is the length of~$\Delta_m$. For each $m$, we consider the intervals~$\mathfrak{a}_v$ that intersect with $\big[0, \frac{2}{3}l_m\big]$ and denote the index of the rightmost interval by $N_m$.
Then we set 
$$
	\psi^1_{m,\nu}(\xi) = \psi^1_m(\xi)\sum_{v=\nu}^{N_m} \widehat\theta_v(\xi - a_m),\quad \nu \le N_m.
$$
It is clear that $\psi^1_{m,\nu} \to \psi^1_m$ in $L^2$ as $\nu\to -\infty$.

Further, we consider the intervals
$$
	J_{k,j} = \big[j2^k,(j+8)2^k\big],\quad k,j\in\mathbb{Z},
$$
which can be obtained from the dyadic ones by 8-fold dilation with preservation of the left ends.
It turns out that any interval in~$\mathbb{R}$ can, in a sense, be approximated by some interval~$J_{k,j}$. Namely, we can prove the following simple fact.
\begin{Rem}
	For any interval $\Delta = [a,b]$, there exist indices $k,j\in\mathbb{Z}$ such that 
	$\Delta\subset \frac{3}{4}J_{k,j}$ and
	$|\Delta| \asymp |J_{k,j}|$.
\end{Rem}
\begin{proof}
	Choose the index $k$ such that 
	$2^k \le |\Delta| < 2^{k+1}$, and set 
	$$
		j = \sup\big\{j \in \mathbb{Z} \colon (j+1)2^{k} < a\big\}.
	$$
	Then it is easy to see that the interval $J_{k,j}$ is required.
\end{proof}

Thus, for each $m$ and $v$, we can choose indices~$k_{m,v}$ and~$j_{m,v}$ such that 
$$
	a_m+\mathfrak{a}_v\subset \tfrac{3}{4}J_{k_{m,v},\,j_{m,v}}
$$
and the lengths of this intervals are comparable. By $a_{m,v}$ we denote the left ends of $J_{k_{m,v},\,j_{m,v}}$, i.e.,
we set $a_{m,v}=j_{m,v}2^{k_{m,v}}$.

Next, for each $k\in\mathbb{Z}$ we build a function $\phi_k$ such that 
$$
	{\supp\widehat\phi_k\subset J_{k,0}}\quad\mbox{and}
	\quad\widehat\phi_k\equiv 1\quad\mbox{on}\quad\tfrac{3}{4}J_{k,0}.
$$
Namely, let $\phi$ be a function in $\mathcal{S}$ such that $\supp\widehat\phi\subset [0,8]$ and $\widehat\phi \equiv 1$ on $[1,7]$.
Then we set $\phi_k(t) = 2^k\phi(2^k t)$.

Also, we need some smooth extension of~$\psi^1$. Namely,
let~$\tilde \psi$ be a function in $\mathcal{S}$ such that $\tilde \psi \equiv \psi^1$ on $[0,1]$ and $\tilde \psi \equiv 0$ on $[1,+\infty)$. Furthermore, we introduce the functions 
$\varphi_m(t) = l_m\tilde\psi^{\vee}(l_m t)$. Note that $\widehat\varphi_m(\xi) = \psi^1_m(\xi+a_m)$ on $[0,l_m]$ for each~$m$.

Finally, we set $\delta_{m,v}=a_{m,v}-a_m$ and
\begin{equation}\label{g}
	g_{m,v}(t)= \int\limits_{\mathbb{R}} \phi_{k_{m,v}}(t-y)e^{-2\pi i\, a_{m,v}y}f(y) \,dy.
\end{equation}
Using all the objects introduced above, we can write the following identity:
\begin{multline*}%\label{Sum}
	e^{-2\pi i\,a_m x}\big(\widehat f\psi_{m,\nu}\big)^{\vee}(x) \\= 
	\sum_{v=\nu}^{N_m}\int\limits_{\mathbb{R}} e^{2\pi i\,\delta_{m,v}\tau}\theta_v(x - \tau)\int\limits_{\mathbb{R}}
	e^{-2\pi i\,\delta_{m,v}(\tau - t)}\varphi_m(\tau - t)\,g_{m,v}(t)\,d t d \tau,
\end{multline*}
To verify this relation, it suffices to compare
the Fourier transforms of its left and right parts.
Denoting $e^{2\pi i\,\delta_{m,v} \tau}\theta_v(x - \tau)$ by $\rho_{m,v}(x,\tau)$ and 
$e^{-2\pi i\,\delta_{m,v}t}\varphi_m(t)$ by $\Phi_{m,v}(t)$, we have
\begin{equation}\label{Decomp}
	e^{-2\pi i\,a_m x}\big(\widehat f\psi_{m,\nu}\big)^{\vee}(x) = 
	\sum_{v=\nu}^{N_m}\int\limits_{\mathbb{R}} \rho_{m,v}(x,\tau)\, (\Phi_{m,v}\ast g_{m,v})(\tau) \,d\tau.
\end{equation}
As we will see in Section~\ref{Conc}, the sequence $\{g_{m,v}\}$ can be split into a finite number of subsequences in such a way that the intervals $J_{k_{m,v},\,j_{m,v}}$, corresponding to any one of them, are pairwise disjoint. 
Each such subsequence has the form $Hf$, where $H$ is an operator of Rubio de Francia type: it is similar to our initial operator, but the elements of its kernel are smooth and the corresponding intervals have a more regular structure.
The convolutions with $\Phi_{m,v}$ arise for technical reasons: we need to ``tweak'' the spectrum on the right of $\Delta_m$, otherwise identity~\eqref{Decomp} would not be achieved. Finally, we return to the intervals~$\Delta_m$, 
applying the ``merging'' operator with kernel $\{\rho_{m,v}(x,\tau)\}$. All these operators (Rubio de Francia operators, the operator with kernel $\{\Phi_{m,v}\}$, and the merging operator) can be treated as Calder\'on--Zygmund operators.
In order to make this possible for Rubio de Francia operators, we have shifted the left ends $a_{m,v}$ of our small and regular intervals $J_{k_{m,v},\,j_{m,v}}$ into the origin (instead of doing this for~$\Delta_m$). Therefore, in 
the merging operator the shifts $e^{2\pi i\,\delta_{m,v} \tau}$ occur:
we put each small interval $J_{k_{m,v},\,j_{m,v}}$ in its place within the corresponding big interval~$\Delta_m$.

\section{Merging operator~$R$}
First, we study the operator~$R$ that transforms a double sequence of functions to an unary sequence by the formula
$$
	R(\{h_{m,v}\})(x) = 
	\Big\{\sum_{v=-\infty}^{N_m}\int\limits_{\mathbb{R}} \rho_{m,v}(x,\tau)h_{m,v}(\tau)\,d\tau\Big\}_{m\in\mathcal{M}}.
$$
\begin{Rem}\label{RIsL2Bounded}
The operator~$R$ is $L^2$-bounded.
\end{Rem}
\begin{proof}
This follows immediately from the Plancherel theorem and the fact that each set $\supp\widehat\theta_v$ can intersect
at most two other sets $\supp\widehat\theta_{v-1}$ and $\supp\widehat\theta_{v+1}$.
\end{proof}
\begin{Rem}\label{RPIsZero}
For any double sequence~$q$ of polynomials $q_{m,v}$, we have $Rq \equiv 0$.
\end{Rem}
\begin{proof}
The Fourier transform of the function $e^{2\pi i\,\delta_{m,v} \tau} q_{m,v}(\tau)$ is supported at 
the single point $\delta_{m,v}$, which does not belong to $\supp\widehat\theta_v$. Thus,
$$
	\int\limits_{\mathbb{R}} \rho_{m,v}(\cdot,\tau)q_{m,v}(\tau)\,d\tau \equiv 0.
$$
\end{proof}
Also, the kernel of~$R$ satisfies a certain smoothness condition.
\begin{Le}\label{Smooth1}
	Fix some $m\in\mathcal{M}$. For any interval~$I$ 
	with center~$x_0$\textup, 
	there exist an $l^2$-valued function 
	$$
		p_{m,I}(x,\tau) = \big\{p_{m,v,I}(x,\tau)\big\}_{v\in\mathbb{Z}}
	$$ 
	with the following properties.
	First\textup,
	$$
		p_{m,v,I}(x,\tau) = \sum_{\alpha = 0}^{r-1} x^{\alpha}\,b_{\alpha}^{m,v,I}(\tau),
	$$
	where $b_{\alpha}^{m,v,I}$ are functions in $\mathcal{S}$.
%such that $p_{m,I}(\cdot,\tau)\in \mathcal{P}_r(l^2)$ for any $\tau$\textup, $p_{m,v,I}(x,\cdot)\in \mathcal{S}$ for any $x$\textup, and
	Second\textup,
	\begin{equation}\label{Smooth1Estimate}
		\Big(\sum_{v\in\mathbb{Z}} \big|\rho_{m,v}(x,\tau)-p_{m,v,I}(x,\tau)\big|^2\Big)^{1/2}\le 
		\frac{C_r\, |I|^{r}}{|\tau-x_0|^{r+1}}
	\end{equation}
	for $\tau\notin 2I$ and $x \in I$.
\end{Le}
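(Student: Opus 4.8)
The plan is to read off both assertions from the degree-$(r-1)$ Taylor expansion of $\rho_{m,v}(x,\tau)=e^{2\pi i\,\delta_{m,v}\tau}\theta_v(x-\tau)$ in the variable~$x$ about the center $x_0$ of~$I$. I would simply take $p_{m,v,I}(x,\tau)$ to be that Taylor polynomial,
\[
	p_{m,v,I}(x,\tau)=\sum_{\alpha=0}^{r-1}\frac{(x-x_0)^{\alpha}}{\alpha!}\,\partial_x^{\alpha}\rho_{m,v}(x_0,\tau),
	\qquad \partial_x^{\alpha}\rho_{m,v}(x_0,\tau)=e^{2\pi i\,\delta_{m,v}\tau}\theta_v^{(\alpha)}(x_0-\tau).
\]
Since $\widehat\theta$ has compact support, $\theta\in\mathcal S$, so $\theta_v^{(\alpha)}(x_0-\cdot)\in\mathcal S$, and multiplication by the bounded smooth factor $e^{2\pi i\,\delta_{m,v}\tau}$ keeps this in~$\mathcal S$; expanding $(x-x_0)^{\alpha}$ into powers of~$x$ then puts $p_{m,v,I}$ in the required shape $\sum_{\alpha}x^{\alpha}b^{m,v,I}_{\alpha}(\tau)$ with $b^{m,v,I}_{\alpha}\in\mathcal S$. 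Note that the modulation $e^{2\pi i\,\delta_{m,v}\tau}$, being independent of~$x$, never interferes with any of this; it is genuinely inert for the present lemma. Rapid decay of $\theta_v^{(\alpha)}(x_0-\cdot)$ away from~$x_0$ makes $\{p_{m,v,I}(x,\tau)\}_v$ an $l^2$ sequence for $\tau\notin 2I$.

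For \eqref{Smooth1Estimate} I would estimate the Lagrange remainder
\[
	\rho_{m,v}(x,\tau)-p_{m,v,I}(x,\tau)=\frac{(x-x_0)^{r}}{(r-1)!}\int_0^1(1-t)^{r-1}\,\partial_x^{r}\rho_{m,v}\big(x_0+t(x-x_0),\tau\big)\,dt .
\]
Since $\partial_x^{r}\rho_{m,v}(x,\tau)=e^{2\pi i\,\delta_{m,v}\tau}\theta_v^{(r)}(x-\tau)$ and $\theta_v^{(r)}(u)=A^{(r+1)v}\theta^{(r)}(A^v u)$, rapid decay of $\theta^{(r)}$ yields, for every~$N$,
\[
	\big|\partial_x^{r}\rho_{m,v}(x,\tau)\big|\le C_{r,N}\,A^{(r+1)v}\,(1+A^v|x-\tau|)^{-N}.
\]
When $x\in I$ and $\tau\notin 2I$, every point of the segment from $x$ to $x_0$ lies within $|I|/2<|\tau-x_0|/2$ of~$x_0$, hence at distance $\ge|\tau-x_0|/2$ from~$\tau$; combined with $|x-x_0|^{r}\le(|I|/2)^{r}$ this bounds $|\rho_{m,v}-p_{m,v,I}|$ by a constant multiple of $|I|^{r}A^{(r+1)v}(1+A^v|\tau-x_0|)^{-N}$.

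Squaring and summing over $v\in\mathbb Z$ is the one step that needs attention. Writing $d=|\tau-x_0|$, I must bound $\sum_{v}A^{2(r+1)v}(1+A^vd)^{-2N}$ by $C_{r,N}\,d^{-2(r+1)}$. I would split at the value of~$v$ for which $A^{v}\asymp d^{-1}$: over the smaller indices $(1+A^vd)^{-2N}\asymp1$, so the sum is geometric with ratio $A^{2(r+1)}>1$ and is controlled by its largest term $\asymp d^{-2(r+1)}$; over the larger indices the summand is $\asymp A^{2(r+1-N)v}d^{-2N}$, which for $N>r+1$ is a convergent geometric series controlled by its largest term $\asymp d^{2(N-r-1)}d^{-2N}=d^{-2(r+1)}$. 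Taking square roots gives \eqref{Smooth1Estimate} with $C_r$ depending only on~$r$ and the fixed data $\theta,A$, uniformly in~$m$, since every trace of~$m$ resides in the unimodular factor $e^{2\pi i\,\delta_{m,v}\tau}$ and never affects sizes. The main (and essentially only) obstacle is this two-piece geometric summation together with the right choice $N>r+1$; the rest is Taylor's theorem and the compact support of~$\widehat\theta$.
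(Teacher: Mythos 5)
Your proposal is correct and follows essentially the same route as the paper's sketch: both take $p_{m,v,I}$ to be the degree-$(r-1)$ Taylor polynomial of $\theta_v$ (equivalently, of $\rho_{m,v}$ in $x$) about the center $x_0$, bound the remainder by $C|I|^{r}A^{(r+1)v}(1+A^{v}|\tau-x_0|)^{-N}$ via rapid decay of $\theta\in\mathcal{S}$, and split the sum over $v$ at $A^{v}\asymp|\tau-x_0|^{-1}$ to get two geometric tails each controlled by $|\tau-x_0|^{-2(r+1)}$. The only cosmetic difference is that you derive the remainder bound from the Lagrange integral form whereas the paper states it directly; otherwise the argument and the choice of cutoff $N>r+1$ (the paper's $\beta=r+2$) coincide.
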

Clearly, it suffices to prove this lemma for the functions $\theta_v(x-\tau)$ instead of $\rho_{m,v}(x,\tau)$.
The corresponding proof can be found, for example, in~\cite{Ki}. Also we provide its sketch in the last section.

Let $h = \{h_{m,v}\}$ be an arbitrary $l^2$-va\-lu\-ed 
function such that $M_{i,2,s}h$ is finite at some point. 
We introduce a certain modification of $R$ whose action on any such $h$ is well defined. Namely, we may argue, e.g., as in Section~4.4 of~\cite{KiKr}. Consider some intervals
$J_1 \subset J_2 \subset \cdots$ whose union coincides with $\mathbb{R}$. 
The $L^2$-boundedness of $R$, together with Lemmas~\ref{fIsSuit} and~\ref{Smooth1},
implies that on each interval~$J_\alpha$, we may construct the following $l^2$-valued function:
\begin{multline*}
W_{\alpha,h}(x) = R(\chi_{2J_{\alpha}}h)(x)\\
\hskip25pt+
\bigg\{\int\limits_{\mathbb{R}\setminus 2J_{\alpha}} \sum_{v=-\infty}^{N_m}\big(\rho_{m,v}(x,\tau)-p_{m,v,J_{\alpha}}(x,\tau)\big)h_{m,v}(\tau)\,d\tau\bigg\}_{m\in\mathcal{M}},\\
 x\in J_{\alpha}.
\end{multline*}
Repeating the arguments from Section~4.4 of~\cite{KiKr}, we can prove that each function $W_{\alpha+1,h}(x)-W_{\alpha,h}(x)$, $x\in J_{\alpha}$, 
is equal to some polynomial in $\mathcal{P}_r(l^2)$. Thus, we can define a modification~$\widetilde R$ of the operator~$R$ as follows.
We set $\widetilde Rh = W_{1,h}$ on $J_1$. Next, we set $\widetilde Rh = W_{2,h} + P$ on $J_2$, where $P$ is polynomial in $\mathcal{P}_r(l^2)$ such that 
$P=W_{2,h}-W_{1,h}$ on $J_1$. Further, we extend $\widetilde Rh$ to $J_3$ in the same way, and so on.
\begin{Rem}\label{RPIsP}
For any $l^2$-valued polynomial~$q$, we have $\widetilde R q \in \mathcal{P}_r(l^2)$.
\end{Rem}
\begin{proof}
We can use the arguments similar to those in Section~4.4.1 of~\cite{KiKr}. Namely, we introduce the functions
\begin{multline}\label{PartsOfParts}
W_{\alpha,h}^{\nu}(x) = 
\bigg\{\sum_{v=\nu}^{N_m}\bigg(\int\limits_{\mathbb{R}} \rho_{m,v}(x,\tau)\,h_{m,v}(\tau)\,d\tau\\
-\!\!\!\int\limits_{\mathbb{R}\setminus 2J_{\alpha}}\!\!\! p_{m,v,J_{\alpha}}(x,\tau)\,h_{m,v}(\tau)\,d\tau\bigg)\bigg\}_{m\in\mathcal{M}},\quad
x\in J_{\alpha}.
\end{multline}
It is easy to see that $W_{\alpha,h}^{\nu} \to W_{\alpha,h}$ in $L^2(J_{\alpha},l^2)$ 
as $\nu\to-\infty$. Let $h$ be our polynomial~$q$. Then since $W_{\alpha,q}^{\nu}$ are polynomials (by Fact~\ref{RPIsZero}), the function $W_{\alpha,q}$ is 
a polynomial itself.
\end{proof}

Together with Facts~\ref{RIsL2Bounded} and~\ref{RPIsP}, Lemma~\ref{Smooth1} implies the following statement, which can be proved in the
same way as Theorem~4.21 in Section~4.4.1 of~\cite{KiKr} (in the present paper, we will also prove a statement in the same spirit for a more complicated operator, see Lemma~\ref{HIsBounded}).
\begin{Le}\label{RIsBounded}
	Let $h$ be a measurable $l^2$-valued function such that $M_{i,2,s}h$ is finite at some point.
	Then
	$$
		M_{r,2,s}(\widetilde Rh) \le C M_{i,2,s}h.
	$$
\end{Le}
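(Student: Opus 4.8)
\textbf{Proof plan for Lemma~\ref{RIsBounded}.}
The plan is to mimic the proof of the Calder\'on--Zygmund-type estimate for the sharp maximal function in the Morrey--Campanato setting (Theorem~4.21 of~\cite{KiKr}), using the three facts already established: $L^2$-boundedness of $R$ (Fact~\ref{RIsL2Bounded}), the identity $\widetilde R q\in\mathcal{P}_r(l^2)$ for polynomial inputs (Fact~\ref{RPIsP}), and the kernel smoothness estimate~\eqref{Smooth1Estimate} from Lemma~\ref{Smooth1}. First I would fix a cube (here, an interval) $Q$ centered at an arbitrary point $x_0$, with the goal of producing a single $l^2$-valued polynomial $P_Q\in\mathcal{P}_r(l^2)$ for which
$$
	\frac{1}{|Q|^{s}}\Big(\frac{1}{|Q|}\int_Q\big|\widetilde Rh-P_Q\big|_{l^2}^2\Big)^{1/2}\le C\, M_{i,2,s}h(z)
$$
for a suitably chosen point $z$ (any point where $M_{i,2,s}h$ is finite — by Fact~\ref{AlmostGood} such points are dense enough, and by the definition of $\widetilde R$ via the exhausting intervals $J_\alpha$ the value $\widetilde Rh$ on $Q$ is unambiguous up to the additive polynomial normalization). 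By Remark~\ref{AlmostGood} we may replace $h$ by $h-\Pi$ for a collection $\Pi=\{\Pi_{m,v}\}$ of polynomials in $\mathcal{P}_i$; since $\widetilde R\Pi\in\mathcal{P}_r(l^2)$ by Fact~\ref{RPIsP} and such a polynomial can be absorbed into $P_Q$, this reduction is harmless and ensures $h-\Pi\in L^2_{\mathrm{loc}}(l^2)$.

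Next I would split $h-\Pi = (h-\Pi)\chi_{2Q} + (h-\Pi)\chi_{\mathbb{R}\setminus 2Q}$ and handle the two pieces separately on $Q$. For the local piece, $L^2$-boundedness of $R$ (Fact~\ref{RIsL2Bounded}) gives
$$
	\Big(\int_Q\big|R((h-\Pi)\chi_{2Q})\big|_{l^2}^2\Big)^{1/2}\le C\Big(\int_{2Q}\big|h-\Pi\big|_{l^2}^2\Big)^{1/2},
$$
and the right-hand side is controlled, after choosing $\Pi$ to be the collection $\mathfrak{P}_{2Q}h$ realizing the infimum in $\widetilde M_{i,2,s}$ roughly (see~\eqref{DefOfM2}), by $|2Q|^{1/2}|2Q|^{s}M_{i,2,s}h(x_0)$, which is $\le C|Q|^{1/2+s}M_{i,2,s}h(x_0)$; dividing by $|Q|^{1/2+s}$ yields the bound. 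For the far piece, I would use the polynomial $p_{m,v,Q}(x,\tau)$ supplied by Lemma~\ref{Smooth1}: the natural candidate for (the relevant part of) $P_Q$ is
$$
	\Big\{\sum_{v=-\infty}^{N_m}\int_{\mathbb{R}\setminus 2Q} p_{m,v,Q}(x,\tau)\,(h_{m,v}(\tau)-\Pi_{m,v}(\tau))\,d\tau\Big\}_{m\in\mathcal{M}},
$$
which lies in $\mathcal{P}_r(l^2)$ in $x$ because each $p_{m,v,Q}(\cdot,\tau)$ is a polynomial of degree $<r$ in $x$ with Schwartz coefficients in $\tau$ — and the convergence of the integral is exactly what Lemma~\ref{fIsSuit} was designed to guarantee. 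Subtracting, the contribution of the far piece to $\widetilde Rh - P_Q$ on $Q$ becomes
$$
	\Big\{\sum_{v}\int_{\mathbb{R}\setminus 2Q}\big(\rho_{m,v}(x,\tau)-p_{m,v,Q}(x,\tau)\big)\big(h_{m,v}(\tau)-\Pi_{m,v}(\tau)\big)\,d\tau\Big\}_m.
$$
Taking the $l^2$-norm in $m$ (after first bounding $|\cdot|_{l^2}$ by passing the $l^2$ over the $v$-index via Cauchy--Schwarz against~\eqref{Smooth1Estimate}), the pointwise size on $Q$ is
$$
	\le C_r\int_{\mathbb{R}\setminus 2Q}\frac{|Q|^{r}}{|\tau-x_0|^{r+1}}\big|h(\tau)-\Pi(\tau)\big|_{l^2}\,d\tau.
$$
Now I would dyadically decompose $\mathbb{R}\setminus 2Q = \bigcup_{k\ge1}(2^{k+1}Q\setminus 2^k Q)$; on each annulus $|\tau-x_0|\asymp 2^k|Q|$, so the $k$-th term is bounded by
$$
	C_r\,2^{-k(r+1)}|Q|^{-1}\int_{2^{k+1}Q}\big|h-\Pi\big|_{l^2}
	\le C_r\,2^{-k(r+1)}|Q|^{-1}|2^{k+1}Q|^{1/2}\Big(\int_{2^{k+1}Q}\big|h-\Pi\big|_{l^2}^2\Big)^{1/2}.
$$
To estimate $\big(\int_{2^{k+1}Q}|h-\Pi|_{l^2}^2\big)^{1/2}$ I need to compare the fixed polynomial $\Pi=\mathfrak{P}_{2Q}h$ (good for $2Q$) with the polynomial $\mathfrak{P}_{2^{k+1}Q}h$ that is good for $2^{k+1}Q$; the difference is a polynomial in $\mathcal{P}_i(l^2)$, and a standard comparison-of-averages argument across nested cubes (using $\widetilde M_{i,2,s}h(x_0)$ at each scale) shows that the accumulated discrepancy grows at most polynomially in $k$, like $C\,2^{k\max\{i-1,0\}}|2^{k+1}Q|^{1/2+s}M_{i,2,s}h(x_0)$ — this is where the hypothesis $r>\max\{s,i-1\}$ enters. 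Plugging in, the $k$-th term is
$$
	\le C\,2^{-k(r+1)}\cdot 2^{k/2}|Q|^{-1/2}\cdot 2^{k\max\{i-1,0\}}\cdot 2^{k(1/2+s)}|Q|^{1/2+s}M_{i,2,s}h(x_0)
	= C\,2^{-k(r+1-1-\max\{i-1,0\}-s)}|Q|^{s}M_{i,2,s}h(x_0),
$$
and since $r+1-1-\max\{i-1,0\}-s = r-s-\max\{i-1,0\} > 0$ under our hypotheses (when $i\ge1$ this is $r-\max\{s,i-1\}>0$; when $i=0$, $\mathcal{P}_0=\{0\}$ and $r-s>0$), the geometric series in $k$ converges. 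Dividing by $|Q|^{s}$ and taking the supremum over all $Q\ni x$ gives $M_{r,2,s}(\widetilde Rh)(x)\le C\,M_{i,2,s}h(x_0)$, and letting $x_0$ range over the points where the right-hand side is finite (and noting the inequality is trivial otherwise) completes the argument.

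\textbf{Main obstacle.} The delicate point is the scale-by-scale control of the polynomials $\mathfrak{P}_{2^{k+1}Q}h$ across the dyadic annuli: one must verify that replacing the "locally optimal" polynomial at scale $2Q$ by the one at scale $2^{k+1}Q$ costs only a factor polynomial in $k$ of degree $\max\{i-1,0\}$ (this is the classical obstruction behind the condition $s\le i$ and the requirement $r>\max\{s,i-1\}$), and that this polynomial growth is strictly beaten by the $2^{-k(r+1)}$ decay coming from the kernel smoothness~\eqref{Smooth1Estimate}. Getting the bookkeeping right so that every $|Q|$-power and every $2^k$-power matches the claimed homogeneity $|Q|^{s}M_{i,2,s}h$ is the technical heart; everything else (the $L^2$ piece, the polynomial structure of $P_Q$, well-definedness of $\widetilde Rh$) is routine given Facts~\ref{RIsL2Bounded}--\ref{RPIsP} and Lemmas~\ref{fIsSuit} and~\ref{Smooth1}.
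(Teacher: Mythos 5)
Your decomposition is the right one and is essentially the paper's (the paper refers the reader to the proof of Lemma~\ref{HIsBounded}, which proceeds exactly as you describe: subtract $\mathfrak{P}_{2Q}h$, handle the local piece by $L^2$-boundedness, subtract the polynomials from Lemma~\ref{Smooth1} on the far piece, and decompose the complement of $2Q$ into dyadic annuli). The genuine gap is in the telescoping bookkeeping for the far piece. You claim the accumulated polynomial discrepancy satisfies
$$
\Big(\int_{2^{k+1}Q}|h-\Pi|_{l^2}^2\Big)^{1/2}\le C\,2^{k\max\{i-1,0\}}\,|2^{k+1}Q|^{1/2+s}\,M_{i,2,s}h(x_0),
$$
and then conclude convergence of $\sum_k 2^{-k(r-\max\{i-1,0\}-s)}$ by asserting that $r-s-\max\{i-1,0\}=r-\max\{s,i-1\}$ when $i\ge1$. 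That identity is false whenever $s>0$ and $i>1$ (it reads $s+(i-1)=\max\{s,i-1\}$, which only holds if $\min\{s,i-1\}=0$). Concretely, take $i=2$, $s=1$, $r=2$: this satisfies $r>\max\{s,i-1\}=1$ and is exactly the Zygmund-class regime $\Cam_2^{1,2}=Z$ the theorem is meant to cover, yet your series in $k$ has a zero exponent and does not converge. Moreover the claimed discrepancy bound itself is not an upper bound when $s<0<i-1$ (your expression underestimates the true size by a factor $2^{-ks}>1$), so the argument also breaks in the Morrey range.

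The fix is to not collapse the telescoping into a single power of $2^k$. Following the paper's proof of Lemma~\ref{HIsBounded}, one writes the discrepancy as a telescoping sum over intermediate scales $2^{\alpha}Q$, $1\le\alpha\le k$, and uses Lemma~\ref{LeFromBook} together with Fact~\ref{RemFromBook} at each scale to obtain
$$
\|\mathfrak{P}_{2^{\alpha+1}Q}h-\mathfrak{P}_{2^{\alpha}Q}h\|_{L^{\infty}(2^{k+1}Q)}
\le C\,2^{(k-\alpha)(i-1)}\,|2^{\alpha+1}Q|^{s}\,\mu,
$$
which yields
$$
\Big(\int_{2^{k+1}Q}|h-\Pi|_{l^2}^2\Big)^{1/2}
\le C\,|2^{k+1}Q|^{1/2}|Q|^{s}\mu\;2^{k(i-1)}\sum_{\alpha=1}^{k}2^{\alpha(s-i+1)}.
$$
Combined with your $2^{-k(r+1)}|Q|^{-1}|2^{k+1}Q|^{1/2}$ prefactor from the kernel bound~\eqref{Smooth1Estimate}, the $k$-th term becomes $C\,|Q|^{s}\mu\,2^{-k(r-i+1)}\sum_{\alpha=1}^{k}2^{\alpha(s-i+1)}$. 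Now one must case-split on the sign of $s-i+1$: the inner sum is $O(1)$ if $s<i-1$ (requiring $r>i-1$), $O(k)$ if $s=i-1$ (again $r>i-1$), and $O(2^{k(s-i+1)})$ if $s>i-1$ (requiring $r>s$). In every case the needed exponent condition is precisely $r>\max\{s,i-1\}$, which is the hypothesis. The rest of your write-up — the use of Facts~\ref{RIsL2Bounded} and~\ref{RPIsP}, the reduction to $h-\Pi\in L^2_{\mathrm{loc}}(l^2)$ via Fact~\ref{AlmostGood}, the choice of $P_Q$, the passage from the $l^2(v)$ estimate of Lemma~\ref{Smooth1} to the $l^2(m,v)$ estimate by Cauchy--Schwarz and Minkowski — is sound.
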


Suppose 
\begin{equation}\label{h}
	h = \{\Phi_{m,v}\ast g_{m,v}\}_{m\in\mathcal{M},\,v\le N_m}.
\end{equation} 
Lemma~\ref{RIsBounded} allows us to reduce Theorem~\ref{ExMainT} to the estimate $M_{r,2,s}h \le CM_{i,2,s}f$. Indeed, suppose this estimate is fulfilled.
By Fact~\ref{AlmostGood}, we can choose a collection~$P$ of polynomials in $\mathcal{P}_r$ such that
$\tilde h = h - P$ is a function in $L^2_{\mathrm{loc}}(l^2)$. By Lemma~\ref{Smooth1}, we obtain 
$M_{r,2,s}(\widetilde R\tilde h) \le C M_{r,2,s}h$ (here we put $i=r$).
On the other hand, we have 
$$
\widetilde R\tilde h = 
W_{\alpha,\tilde h} + P_{\alpha} = \lim_{\nu\to-\infty} \big(W_{\alpha,\tilde h}^{\nu} + P_{\alpha}^{\nu}\big)\quad\mbox{on each}\quad J_{\alpha},
$$
where the functions $W_{\alpha,\tilde h}^{\nu}$ are defined by~\eqref{PartsOfParts}, $P_{\alpha}$ are polynomials ($l^2$-valued) such that 
the functions
$W_{\alpha,\tilde h} + P_{\alpha}$ agree on different~$J_\alpha$, the polynomials~$P_{\alpha}^{\nu}$ are chosen by the same principle for 
$W_{\alpha,\tilde h}^{\nu}$, and the limits are taken in $L^2(J_{\alpha},l^2)$. 
By identity~\eqref{Decomp} and Fact~\ref{RPIsZero}, we have
$$
W_{\alpha,\,\tilde h}^{\nu}(x) + P_{\alpha}^{\nu}(x) = 
\Big\{e^{-2\pi i\,a_m x}\big(\widehat f\psi^1_{m,\nu}\big)^{\vee}(x) - p^1_{m,\nu}(x)\Big\}_{m\in\mathcal{M}},
$$
where $p^1_{m,\nu}$ are some polynomials that do not depend on~$\alpha$.
So we reduce our theorem to the estimate $M_{r,2,s}h \le CM_{i,2,s}f$, where $h$ is defined by~\eqref{h}.

\section{Conclusion of the proof}\label{Conc}
First, we get rid of the functions~$\Phi_{m,v}$. For this purpose, we
introduce a simple operator~$\boldsymbol\Phi$:
$$
	\boldsymbol\Phi\big(\{g_{m,v}\}\big) = \big\{\Phi_{m,v}\ast g_{m,v}\big\}_{m\in\mathcal{M},\,v\le N_m}.
$$
By Fact~\ref{AlmostGood} and Lemma~\ref{fIsSuit}, the operator~$\boldsymbol\Phi$ is well defined for any sequence of measurable functions $g=\{g_{m,v}\}$, provided $M_{i,2,s}g$ is finite at some point. The Plancherel theorem immediately implies that $\boldsymbol\Phi$ is $L^2$-bounded. 
Also it is clear that if $P$ is a sequence of polynomials in $\mathcal{P}_i$, then each element of $\boldsymbol\Phi P$ is also a polynomial in $\mathcal{P}_i$.
Finally, we may state the following smoothness condition for the kernel of~$\boldsymbol\Phi$.
\begin{Le}\label{Smooth2}
	Fix some $m\in\mathcal{M}$ and $v\in\mathbb{Z}$\textup, $v\le N_m$. Then for any interval~$I$ 
	with center~$\tau_0$\textup, 
	there exist a function $p_{m,v,I}(\tau,t)$ with the following properties.
	First\textup,
	$$
		p_{m,v,I}(\tau,t) = \sum_{\alpha = 0}^{r-1} \tau^{\alpha}\,b_{\alpha}^{m,v,I}(t),
	$$
	where $b_{\alpha}^{m,v,I}$ are functions in $\mathcal{S}$.
%such that $p_{m,v,I}(\cdot,t)\in \mathcal{P}_r$ for any $t\in\mathbb{R}$\textup,  $p_{m,v,I}(\tau,\cdot)\in \mathcal{S}$ for any $\tau\in\mathbb{R}$\textup, and
	Second\textup,
	$$
		\big|\Phi_{m,v}(\tau-t)-p_{m,v,I}(\tau,t)\big|\le 
		\frac{C_r\, |I|^{r}}{|t-\tau_0|^{r+1}}\quad\mbox{for}\quad t\notin 2I \quad\mbox{and}\quad \tau \in I.
	$$
	Here the constant~$C_r$ does not depend on $m$ or $v$.
\end{Le}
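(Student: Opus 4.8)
The plan is to reduce Lemma~\ref{Smooth2} to one uniform estimate on the derivatives of the kernels $\Phi_{m,v}$, and then to argue exactly as for Lemma~\ref{Smooth1}: the approximating ``polynomial'' $p_{m,v,I}(\tau,t)$ will simply be the Taylor polynomial in the variable~$\tau$, of degree $r-1$, of the function $\tau\mapsto\Phi_{m,v}(\tau-t)$ about the center~$\tau_0$ of~$I$, and the error bound will come from the Lagrange remainder together with the same elementary geometric observation used in Lemma~\ref{Smooth1} (that $\tau\in I$, $t\notin 2I$ forces $|\xi-t|\ge|t-\tau_0|/2$ for any $\xi\in I$).

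The key ingredient I would establish first is that, with a constant $C_r$ independent of $m$ and of $v\le N_m$,
\[
  \bigl|\Phi_{m,v}^{(r)}(s)\bigr|\le C_r\,l_m^{\,r+1}\bigl(1+l_m|s|\bigr)^{-(r+1)},\qquad s\in\mathbb{R}.
\]
Since $\Phi_{m,v}(t)=e^{-2\pi i\,\delta_{m,v}t}\varphi_m(t)$ and $\varphi_m^{(j)}(s)=l_m^{\,1+j}(\tilde\psi^{\vee})^{(j)}(l_m s)$ with $\tilde\psi\in\mathcal{S}$, Leibniz's rule gives
\[
  \Phi_{m,v}^{(r)}(s)=\sum_{k=0}^{r}\binom{r}{k}(-2\pi i\,\delta_{m,v})^{k}e^{-2\pi i\,\delta_{m,v}s}\,\varphi_m^{(r-k)}(s),
\]
and, using the Schwartz decay $|(\tilde\psi^{\vee})^{(j)}(u)|\le C_{j}(1+|u|)^{-(r+1)}$, this is bounded by a constant times $l_m^{\,r+1}(1+l_m|s|)^{-(r+1)}\sum_{k}\binom{r}{k}(2\pi|\delta_{m,v}|/l_m)^{k}$. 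Hence the displayed bound follows as soon as one checks that $|\delta_{m,v}|\le C_0\,l_m$ with an absolute constant~$C_0$ (depending only on the fixed~$A$), for then the last sum is at most $(1+2\pi C_0)^r$.

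To get $|\delta_{m,v}|\lesssim l_m$ I would unwind the construction. By the approximation fact for the intervals $J_{k,j}$ stated above, the left end $a_{m,v}=j_{m,v}2^{k_{m,v}}$ of $J_{k_{m,v},j_{m,v}}$ lies within an absolute multiple of $|a_m+\mathfrak a_v|=A^{v-1}(A^{2}-1)$ of the left end $a_m+A^{v-1}$ of $a_m+\mathfrak a_v$, because $a_m+\mathfrak a_v\subset\tfrac34 J_{k_{m,v},j_{m,v}}$ and the two lengths are comparable; hence $|\delta_{m,v}|=|a_{m,v}-a_m|\le C\,A^{v-1}$ for an absolute~$C$. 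Finally, by the very choice of $N_m$ (the rightmost $\mathfrak a_v$ meeting $[0,\tfrac23 l_m]$) one has $A^{v-1}\le A^{N_m-1}\le\tfrac23 l_m$ for every $v\le N_m$, and so $|\delta_{m,v}|\le C_0\,l_m$. Once this is in hand the proof concludes as announced: $p_{m,v,I}(\tau,t)=\sum_{\alpha=0}^{r-1}\tfrac{1}{\alpha!}(\tau-\tau_0)^{\alpha}\Phi_{m,v}^{(\alpha)}(\tau_0-t)$ rewritten in powers of~$\tau$ has coefficients that are finite linear combinations of the functions $t\mapsto\Phi_{m,v}^{(\alpha)}(\tau_0-t)\in\mathcal{S}$ (note $\Phi_{m,v}\in\mathcal{S}$), so $b_{\alpha}^{m,v,I}\in\mathcal{S}$; Taylor's formula gives $\Phi_{m,v}(\tau-t)-p_{m,v,I}(\tau,t)=\tfrac{1}{r!}(\tau-\tau_0)^{r}\Phi_{m,v}^{(r)}(\xi-t)$ with $\xi\in I$; and for $\tau\in I$, $t\notin 2I$ we bound $|\tau-\tau_0|\le|I|/2$, $|\xi-t|\ge|t-\tau_0|/2$, and $l_m^{\,r+1}(1+l_m|\xi-t|)^{-(r+1)}\le|\xi-t|^{-(r+1)}$, which altogether yields $|\Phi_{m,v}(\tau-t)-p_{m,v,I}(\tau,t)|\le C_r|I|^{r}|t-\tau_0|^{-(r+1)}$ with $C_r$ independent of $m,v$.

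The only genuinely delicate step is the bound $|\delta_{m,v}|\le C_0 l_m$: this is where the several dilation scales (the number $A$, the grid of the $J_{k,j}$, and the cutoff $N_m$) must be reconciled, and where uniformity in $m$ and $v$ is actually bought. Everything else is the standard Calder\'on--Zygmund kernel estimate, carried out just as in the proof sketched for Lemma~\ref{Smooth1}.
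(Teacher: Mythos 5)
Your proposal is correct and follows essentially the same route as the paper: approximate $\Phi_{m,v}(\tau-t)$ by its degree-$(r-1)$ Taylor polynomial in $\tau$ about $\tau_0$, expand $\Phi_{m,v}^{(r)}$ via Leibniz into powers of $\delta_{m,v}$ against derivatives of $\varphi_m$, use $|\delta_{m,v}|\lesssim l_m$, and invoke Schwartz decay plus the geometric fact $|\xi-t|\gtrsim|t-\tau_0|$. The only cosmetic difference is that you collapse the paper's two-case comparison ($l_m\lessgtr|t-\tau_0|^{-1}$) into the single bound $l_m^{r+1}(1+l_m|u|)^{-(r+1)}\le|u|^{-(r+1)}$, and you spell out the verification of $|\delta_{m,v}|\le C_0\,l_m$, which the paper asserts tersely.
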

The proof of this lemma is easy. However it will be presented in the last section.
Lemma~\ref{Smooth2}, together with the $L^2$-boundedness of~$\boldsymbol{\Phi}$ and the fact that $\boldsymbol\Phi$ transforms polynomials into 
polynomials, implies the following lemma, which can be proved in the
same way as Theorem~4.21 in~\cite{KiKr} (but without complications concerning the definition of the operator). See also the proof of Lemma~\ref{HIsBounded}, where a more complicated operator is treated.
\begin{Le}\label{PhiIsBounded}
	Let $g$ be a sequence of measurable functions such that $M_{i,2,s}g$ is finite at some point.
	Then
	$$
		M_{r,2,s}(\boldsymbol{\Phi}g) \le C M_{i,2,s}g.
	$$
\end{Le}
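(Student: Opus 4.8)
The plan is to follow the template of Theorem~4.21 in~\cite{KiKr}, exploiting the three properties of $\boldsymbol\Phi$ already assembled: it is $L^2$-bounded, it sends polynomials in $\mathcal P_i$ to polynomials in $\mathcal P_i$, and its kernel satisfies the Campanato-type smoothness estimate of Lemma~\ref{Smooth2}. First I would reduce to a ``true'' $l^2$-valued function: by Fact~\ref{AlmostGood} there is a collection $P=\{P_{m,v}\}$ of polynomials in $\mathcal P_i$ with $\tilde g = g-P \in L^2_{\mathrm{loc}}(l^2)$; since $\boldsymbol\Phi$ kills nothing but merely shifts by polynomials (it maps $P$ to a sequence of polynomials in $\mathcal P_i$), and since $M_{i,2,s}$ is insensitive to adding $\mathcal P_i$-polynomials, it suffices to estimate $M_{r,2,s}(\boldsymbol\Phi\tilde g)$ by $M_{i,2,s}\tilde g$. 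So from now on assume $g\in L^2_{\mathrm{loc}}(l^2)$ and $M_{i,2,s}g(x_*)<\infty$ at some point; Lemma~\ref{fIsSuit} then guarantees that all the integrals defining $\boldsymbol\Phi g$ converge absolutely, so $\boldsymbol\Phi g$ is genuinely defined.

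Next I would estimate $\widetilde M_{r,2,s}(\boldsymbol\Phi g)$ via the equivalent form~\eqref{DefOfM2}. Fix a cube (interval) $I$ with center $\tau_0$. Split $g = g\chi_{2I} + g\chi_{\mathbb R\setminus 2I}$. For the local part $g\chi_{2I}$: using a bounded projection $\mathfrak P_{2I}$ onto $\mathcal P_i$, write $g\chi_{2I} = (g-\mathfrak P_{2I}g)\chi_{2I} + (\mathfrak P_{2I}g)\chi_{2I}$; the polynomial piece contributes a polynomial after $\boldsymbol\Phi$ plus a controlled tail (here one must be slightly careful since $\chi_{2I}$ truncates the polynomial, but the tail is handled by Lemma~\ref{Smooth2}), while for the genuinely oscillating piece $L^2$-boundedness of $\boldsymbol\Phi$ gives
$$
\Big(\frac{1}{|I|}\int_I |\boldsymbol\Phi((g-\mathfrak P_{2I}g)\chi_{2I})|_{l^2}^2\Big)^{1/2}
\le \frac{C}{|I|^{1/2}}\Big(\int_{2I}|g-\mathfrak P_{2I}g|_{l^2}^2\Big)^{1/2}
\le C|I|^{s/n}\,M_{i,2,s}g(x)
$$
for any $x\in I$, which is exactly the required bound after dividing by $|I|^{s/n}$. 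For the far part, on $I$ I would subtract the polynomial approximant $\{p_{m,v,I}(\tau,t)\}$ supplied by Lemma~\ref{Smooth2}: the subtracted terms are genuine polynomials in $\tau$ of degree $<r$ (so harmless for $M_{r,2,s}$), and what remains is
$$
\bigg\{\int_{\mathbb R\setminus 2I}\big(\Phi_{m,v}(\tau-t)-p_{m,v,I}(\tau,t)\big)g_{m,v}(t)\,dt\bigg\}_{m,v},
$$
whose $l^2$-norm over $v$ (and then over $m$) is controlled, using $|\Phi_{m,v}(\tau-t)-p_{m,v,I}(\tau,t)|\le C_r|I|^{r}|t-\tau_0|^{-r-1}$ uniformly in $m,v$, by a standard dyadic decomposition $\mathbb R\setminus 2I = \bigcup_{k\ge1}(2^{k+1}I\setminus 2^k I)$ together with the defining property of $M_{i,2,s}g$ on the cubes $2^kI$; since $r>\max\{s,i-1\}\ge s$, the geometric series $\sum_k 2^{-k(r-s)}$ converges and yields $\le C|I|^{s/n}M_{i,2,s}g(x)$ for $x\in I$. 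Summing the local and far contributions gives $\widetilde M_{r,2,s}(\boldsymbol\Phi g)\le C M_{i,2,s}g$ pointwise, and the equivalence $\widetilde M_{r,2,s}\asymp M_{r,2,s}$ finishes the proof.

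The main obstacle, as usual in these Campanato-space arguments, is bookkeeping of the polynomial corrections: one must check that subtracting the $\mathcal P_i$-valued polynomial $P$ at the start, the truncated polynomial $(\mathfrak P_{2I}g)\chi_{2I}$ in the local term, and the $\tau$-polynomials $p_{m,v,I}$ in the far term are all mutually consistent and together produce a single honest polynomial correction in $\mathcal P_r$ (or $\mathcal P_r(l^2)$) that may be absorbed into the infimum defining $M_{r,2,s}$ — while never using more than the stated regularity $g\in\mathcal P_i$-modulo-$L^2_{\mathrm{loc}}$. Everything else is the routine ``Calderón–Zygmund kernel $\Rightarrow$ sharp-maximal estimate'' mechanism, run in the Morrey–Campanato scale exactly as in Section~4.4.1 of~\cite{KiKr}; the uniformity of $C_r$ in $m,v$ in Lemma~\ref{Smooth2} is what keeps the final constant independent of $\{\Delta_m\}$.
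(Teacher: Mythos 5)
Your plan follows the same route the paper takes: Lemma~\ref{PhiIsBounded} is proved by the argument of Theorem~4.21 of~\cite{KiKr} and mirrors the fully written proof of Lemma~\ref{HIsBounded}, namely the three ingredients ($L^2$-boundedness, polynomials mapped to polynomials, Calder\'on--Zygmund kernel smoothness from Lemma~\ref{Smooth2}) fed into the standard Campanato-maximal-function machinery, with the initial reduction via Fact~\ref{AlmostGood} needed because $g$ is only a sequence of measurable functions.

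The one place where your sketch, as written, does not close is the far-field term. You subtract only the $\tau$-polynomial $p_{m,v,I}$ from the kernel and then appeal to ``the defining property of $M_{i,2,s}g$ on the cubes $2^kI$'' to bound $\int_{\mathbb{R}\setminus 2I}(\Phi_{m,v}-p_{m,v,I})g_{m,v}\,dt$; but that property controls $g-\mathfrak{P}_{2^kI}g$, not $g$ itself, which may be polynomially large far from~$I$. You must also subtract a polynomial from $g$ --- and crucially the \emph{same} one, $\mathfrak{P}_{I}g$, across all scales, since $M_{r,2,s}$ allows only one correcting polynomial per cube. The discrepancies $\mathfrak{P}_{2^{\alpha+1}I}g-\mathfrak{P}_{2^{\alpha}I}g$ are then estimated via Lemma~\ref{LeFromBook} and Fact~\ref{RemFromBook} and produce an inner sum $\sum_{\alpha=1}^{\sigma} 2^{\alpha(s-i+1)}$ that must be beaten by the outer factor $2^{-\sigma(r-i+1)}$; this is exactly where $r>\max\{s,i-1\}$ is used, and it is not captured by the single geometric series $\sum_k 2^{-k(r-s)}$ you mention (which would only need $r>s$). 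The paper's bookkeeping in Lemma~\ref{HIsBounded} --- set $g_I=\mathfrak{P}_I g$ once and write $\boldsymbol{\Phi}g - P_I=\boldsymbol{\Phi}(\chi_{2I}(g-g_I))+\int_{\mathbb{R}\setminus 2I}(\Phi-p_I)(g-g_I)$ --- makes this cancellation visible; your split with $(\mathfrak{P}_{2I}g)\chi_{2I}$ carried separately is equivalent after regrouping the tail $\boldsymbol{\Phi}((\mathfrak{P}_{2I}g)\chi_{\mathbb{R}\setminus 2I})$ into the far part, but as stated it hides it.
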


So it remains to prove that for the functions~$g_{m,v}$ defined by~\eqref{g}, we have 
\begin{equation}\label{FinalEstimate}
	M_{r,2,s}\big(\{g_{m,v}\}_{m\in\mathcal{M},\,v\le N_m}\big) \le C M_{i,2,s}f.
\end{equation}

\paragraph{Rubio de Francia's operators.}
We split each sequence $\{v\in\mathbb{Z}\colon v\le N_m\}$ into $100$ subsequences~$\mathcal{V}_m^d$ (here $d=1,\dots,100$) in
the following way: $v$ gets into $\mathcal{V}_m^d$ if the residue of $N_m-v$ modulo~$100$ equals~$d$. 
The following remark is almost obvious.
\begin{Rem}\label{IntervalsNotIntersect}
Each collection $\big\{J_{k_{m,v},\,j_{m,v}}\big\}_{m\in\mathbb{\mathcal{M}},\,v\in\mathcal{V}_m^d}$, $d=1,\dots,100$, consists of pairwise
disjoint intervals, provided $A$ is sufficiently close to~$1$. Moreover, these intervals do not contain the origin.
\end{Rem}

Now let $\mathcal{A}$ be any subset of $\mathbb{Z}^2$ such that $\{J_{k,j}\}_{(k,j)\in\mathcal{A}}$ is a collection of pairwise
disjoint intervals that do not contain the origin.
Consider the operator~$H$ defined by the formula
\begin{equation}\label{DefOfH}
%	H_df(t)= 
%	\bigg\{\int\limits_{\mathbb{R}}
%	\phi_{k_{m,v}}(t-y)e^{-2\pi i\, a_{m,v}y}f(y) \,dy
%	\bigg\}_{m\in\mathcal{M},\,v\in\mathcal{V}_m^d}.
	Hf(t)= 
	\int\limits_{\mathbb{R}}
	\kappa(t,y)f(y) \,dy	
\end{equation}
where 
$$
	\kappa(t,y) = \big\{\kappa_{k,j}(t,y)\big\}_{(k,j) \in \mathcal{A}} = 
	\big\{\phi_k(t-y)e^{-2\pi i\,j2^k y}\big\}_{(k,j) \in \mathcal{A}}.
$$
Such operators were first considered by Rubio de Francia in~\cite{Ru}. We prove the following lemma, which, together with 
Fact~\ref{IntervalsNotIntersect}, immediately implies 
estimate~\eqref{FinalEstimate}.
\begin{Le}\label{HIsBounded}
	Let $f$ be a measurable function such that 
	$M_{i,2,s}f$ is finite at some point.
	Then we have
	$$
		M_{r,2,s}(Hf) \le C M_{i,2,s}f.
	$$
\end{Le}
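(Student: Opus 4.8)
The plan is to follow the scheme of Lemma~\ref{RIsBounded} (the proof of Theorem~4.21 in~\cite{KiKr}), the one new feature being that the ``kernel smoothness'' input has to be produced by a Rubio de Francia type argument instead of being read off, as it was for $R$ and $\boldsymbol\Phi$. The asserted inequality is pointwise, so fix a point $x_0$ with $M_{i,2,s}f(x_0)<\infty$ (otherwise there is nothing to prove) and an interval $I\ni x_0$ with center $t_0$; by Definition~\ref{DefOfM} it suffices to produce polynomials $P_{k,j}\in\mathcal{P}_r$ with
\[
	\frac{1}{|I|^{s}}\Big(\frac{1}{|I|}\int_I |Hf-P|_{l^2}^2\Big)^{1/2}\le C\,M_{i,2,s}f(x_0).
\]
I would first record two structural properties of $H$. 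It is $L^2(l^2)$-bounded: after the substitution $\eta=\xi+j2^k$, the $L^2$-mass of $\widehat{(Hf)_{k,j}}(\xi)=\widehat{\phi_k}(\xi)\widehat f(\xi+j2^k)$ sits on $J_{k,j}$, these intervals are pairwise disjoint and $|\widehat{\phi_k}|\le1$, whence $\|Hf\|_{L^2(l^2)}\le\|f\|_{L^2}$ (this is Fact~\ref{RIsL2Bounded} for $H$). And $H$ annihilates polynomials: for a polynomial $\pi$ the distribution $\widehat{e^{-2\pi i\,j2^k y}\pi}$ is supported at the point $-j2^k$, which lies at positive distance from $\supp\widehat{\phi_k}$ precisely because $0\notin J_{k,j}$, so $(H\pi)_{k,j}\equiv0$ (the analogue of Fact~\ref{RPIsZero}; this is also why the small intervals were demodulated by their \emph{left} ends $j2^k$). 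Hence $Hf=H\bar f$, where $\bar f=f-\pi_{2I}$ and $\pi_{2I}\in\mathcal{P}_i$ almost realizes the infimum defining $M_{i,2,s}f(x_0)$ for the cube $2I\ni x_0$; then $\int_{2I}|\bar f|^2\le C|2I|^{1+2s}(M_{i,2,s}f(x_0))^2$, and by Lemma~\ref{fIsSuit} the function $\bar f(y)(1+|y|^{1+\beta})^{-1}$ is integrable, so all the integrals below converge absolutely (that $Hf$ is well defined was noted before Theorem~\ref{ExMainT}). Split $\bar f=\bar f\chi_{2I}+\bar f\chi_{\mathbb{R}\setminus2I}$.

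For the local piece, $L^2(l^2)$-boundedness gives $\big(\int_I|H(\bar f\chi_{2I})|_{l^2}^2\big)^{1/2}\le C\big(\int_{2I}|\bar f|^2\big)^{1/2}\le C|2I|^{1/2+s}M_{i,2,s}f(x_0)$, which already meets the target (with zero polynomial). For the far piece I would take
\[
	P_{k,j}(t)=\int_{\mathbb{R}\setminus2I}P_k(t,y)\,e^{-2\pi i\,j2^k y}\,\bar f(y)\,dy,\qquad
	P_k(t,y)=\sum_{\alpha=0}^{r-1}\frac{(t-t_0)^\alpha}{\alpha!}\,\phi_k^{(\alpha)}(t_0-y),
\]
the order-$r$ Taylor polynomial in $t$ of $\phi_k(t-y)$ about $t_0$, so that $P_{k,j}\in\mathcal{P}_r$ (the factor $e^{-2\pi i\,j2^ky}$ stays inside the Schwartz coefficients, as in Lemma~\ref{Smooth1}). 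For $t\in I$ one then has, with $W_{k,t}(y)=(\phi_k(t-y)-P_k(t,y))\bar f(y)\chi_{\mathbb{R}\setminus2I}(y)$,
\[
	\big[H(\bar f\chi_{\mathbb{R}\setminus2I})-P\big]_{k,j}(t)
	=\int_{\mathbb{R}\setminus2I}\big(\phi_k(t-y)-P_k(t,y)\big)\,e^{-2\pi i\,j2^k y}\,\bar f(y)\,dy
	=\widehat{W_{k,t}}(j2^k).
\]

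Here lies the one genuinely new point, which I expect to be the main obstacle. Unlike for $R$ and $\boldsymbol\Phi$, the index set of $H$ is two-parameter and, at a fixed scale $k$, may contain \emph{infinitely many} translates $j$; the pointwise $l^2$-norm of the kernel of $H$ is then $+\infty$, so there is no analogue of the kernel estimate~\eqref{Smooth1Estimate} and a bare vector-valued Calder\'on--Zygmund estimate is unavailable. The remedy (Rubio de Francia's device) is to fix $t$ and a scale $k$ and handle the $\ell^2$-sum over $j$ by Plancherel for the $2^{-k}$-periodization $\mathcal{W}_{k,t}(y)=\sum_{l\in\mathbb{Z}}W_{k,t}(y+l2^{-k})$:
\[
	\sum_{j:\,(k,j)\in\mathcal{A}}\big|\big[H(\bar f\chi_{\mathbb{R}\setminus2I})-P\big]_{k,j}(t)\big|^2
	\le\sum_{j\in\mathbb{Z}}\big|\widehat{W_{k,t}}(j2^k)\big|^2
	=2^{-k}\int_0^{2^{-k}}|\mathcal{W}_{k,t}(y)|^2\,dy,
\]
i.e. the disjointness of the $J_{k,j}$ together with the arithmetic of the translates $j2^k$ trades the divergent pointwise kernel bound for a clean $L^2$-identity at each scale.

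The remainder is the usual Campanato bookkeeping, as in~\cite{KiKr}. Feeding in the Taylor remainder bound
\[
	|\phi_k(t-y)-P_k(t,y)|\le C_N\,|I|^r\,2^{k(r+1)}\big(1+2^k|y-t_0|\big)^{-N}\qquad(t\in I,\ y\notin2I),
\]
valid because $\phi_k^{(r)}(u)=2^{k(r+1)}\phi^{(r)}(2^ku)$ with $\phi\in\mathcal{S}$ and $|\xi-y|\ge\tfrac12|y-t_0|$ for $\xi\in I$, $y\notin2I$, then applying Cauchy--Schwarz to the periodized sum and summing the resulting geometric series over scales $k$ (with $N$ large), one obtains
\[
	\int_I \big|H(\bar f\chi_{\mathbb{R}\setminus2I})-P\big|_{l^2}^2\le C\,|I|^{2r+1}\int_{\mathbb{R}\setminus2I}\frac{|\bar f(y)|^2}{|y-t_0|^{2r+1}}\,dy .
\]
Splitting $\mathbb{R}\setminus2I$ into the annuli $2^n|I|<|y-t_0|\le2^{n+1}|I|$ and writing $\int_{2^{n+1}I}|\bar f|^2=\int_{2^{n+1}I}|f-\pi_{2I}|^2$, the standard estimates comparing $\pi_{2I}$ with a near-optimal $\mathcal{P}_i$-polynomial for the dilate $2^{n+1}I$ (through a telescoping chain of doublings) reduce the annulus sum to $\sum_n 2^{2n(\max\{i-1,s\}-r)}(1+n)^2$, which converges precisely because $r>\max\{s,i-1\}$ — the only place this hypothesis enters; integrality of $r$ is used merely so that $\mathcal{P}_r$ is a polynomial space containing the parts $P_k(t,\cdot)$. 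Thus $\int_I|H(\bar f\chi_{\mathbb{R}\setminus2I})-P|_{l^2}^2\le C|I|^{1+2s}(M_{i,2,s}f(x_0))^2$; adding the local-piece bound and taking the supremum over $I\ni x_0$ gives $M_{r,2,s}(Hf)(x_0)\le C\,M_{i,2,s}f(x_0)$, as required.
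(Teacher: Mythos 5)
Your proposal is correct, and its skeleton is the same as the paper's: subtract a polynomial adapted to $2I$, split into a local part handled by the $L^2(l^2)$-boundedness of $H$ and a far part handled by correcting the kernel $\phi_k(t-y)e^{-2\pi i\,j2^k y}$ with the degree-$(r-1)$ Taylor polynomial of $\phi_k$ in $t$, exploit the disjointness of the $J_{k,j}$ through Plancherel at each fixed scale $k$, and finish with the telescoping chain of dilates whose series converges exactly because $r>\max\{s,i-1\}$. The one place where you genuinely deviate is the packaging of the orthogonality-in-$j$ step: the paper isolates it as a standalone dualized kernel estimate (Lemma~\ref{Smooth3}, with the explicit exponent $B_r=r+1/2$, proved via the Riesz--Fischer bound for $\int\big|\sum_j\xi_{k,j}e^{-2\pi i\,j2^k y}\big|^2$ over intervals shorter or longer than the period $2^{-k}$) and then runs the main proof as a duality argument over $u\in L^2(I,l^2)$ with an annulus-by-annulus application of that lemma; you instead work non-dualized, sampling $\widehat{W_{k,t}}$ on the lattice $2^k\mathbb{Z}$ and using the periodization identity plus Cauchy--Schwarz to reach the weighted bound $\int_I|H(\bar f\chi_{\mathbb{R}\setminus 2I})-P|_{l^2}^2\le C|I|^{2r+1}\int_{\mathbb{R}\setminus 2I}|\bar f(y)|^2|y-t_0|^{-(2r+1)}\,dy$. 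These are adjoint formulations of the same estimate and give the identical per-annulus decay $2^{-\sigma(r+1/2)}$, so your route is legitimate; what the paper's arrangement buys is a reusable, $f$-independent smoothness lemma with a clean constant and a main proof that is pure Campanato bookkeeping (also using the projections $\mathfrak{P}_Q$ and the modified maximal function~\eqref{DefOfM2}), whereas yours fuses the kernel analysis into the main estimate, which shortens the structure but hides the three-regime geometric summation over $k$ (and the use of the support restriction $y\notin 2I$ at fine scales $2^k|I|\ge 1$) behind the phrase ``summing the resulting geometric series''; those sums do close, so this is a presentational rather than a mathematical gap. Your remark that $Hf=H\bar f$ should be read, as in Fact~\ref{HPIsZero}, through the spectral-support argument (each component $H\pi$ converges absolutely by Lemma~\ref{fIsSuit} and vanishes because $-j2^k$ stays at positive distance from $\supp\widehat\phi_k$), which is how the paper justifies that its correcting object $P_I$ in~\eqref{HMinusP} is a genuine element of $\mathcal{P}_r$.
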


As usual, first we study the behavior of~$H$ in $L^2$ and on polynomials.
\begin{Rem}
 The operator~$H$ is $L^2$-bounded.
\end{Rem}
\begin{proof}
We have
$$
	j2^k+\supp\widehat\phi_{k} \subset J_{k,j},
$$
and the collection $\{J_{k,j}\}_{(k,j)\in\mathcal{A}}$ consists of pairwise disjoint intervals.
But by the Plancherel theorem, all this implies the $L^2$-boundedness of~$H$.
\end{proof}

\begin{Rem}\label{HPIsZero}
We have $Hq \equiv 0$ for any polynomial~$q$. 
\end{Rem}
\begin{proof}
The Fourier transform of the function $e^{-2\pi i\, j2^k y}q(y)$ is supported at 
the single point $-j2^k$. Since $0\notin J_{k,j}$ for $(k,j)\in\mathcal{A}$, it readily follows that 
$-j2^k \notin \supp\widehat\phi_{k}$ and, therefore,
$Hq \equiv 0$.
\end{proof}

The kernel $\kappa(t,y)$ satisfies the following smoothness condition.
\begin{Le}\label{Smooth3}
	For any interval $I \subset \mathbb{R}$\textup, there exists an $l^2$-va\-lu\-ed function 
	$$
		q_I(t,y) = \big\{q_{k,j,I}(t,y)\big\}_{(k,j) \in \mathcal{A}}
	$$
	with the following properties.
	First\textup,
	\begin{equation}\label{qIsPolynomial}
		q_{k,j,I}(t,y) = \sum_{\alpha = 0}^{r-1} t^{\alpha}\,b_{\alpha}^{k,j,I}(y),
	\end{equation}
	where $b_{\alpha}^{k,j,I}$ are functions in $\mathcal{S}$.
	%such that
	%$q_I(\cdot,y)\in\mathcal{P}_r(l^2)$ for any~$y$\textup,  $q_{k,j,I}(t,\cdot)\in\mathcal{S}$ for any~$t$\textup, 
	%and the following relation is fulfilled\textup: 
	Second\textup, for any $\xi\in l^2$\textup, any $t\in I$\textup, and 
	any $\sigma\in\mathbb{N}$\textup, we have	
	\begin{equation}\label{Smooth3Estimate}
		\bigg(\int\limits_{I_\sigma} \big|\langle \kappa(t,y) - q_I(t,y), \xi\rangle_{l^2}\big|^2 \,dy\bigg)^{1/2} \le 
		C\, 2^{-\sigma B_r}\, |I|^{-1/2}\, |\xi|_{l^2},
	\end{equation}
	where $I_\sigma = 2^{\sigma+1} I \setminus 2^{\sigma} I$ and $B_r = r+1/2$.
\end{Le}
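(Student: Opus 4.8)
The plan is to take $q_I$ componentwise equal to the Taylor polynomial of the kernel in the variable $t$. Let $t_0$ denote the center of $I$ and set
\[
q_{k,j,I}(t,y)=e^{-2\pi i\,j2^{k}y}\sum_{\alpha=0}^{r-1}\frac{(t-t_0)^{\alpha}}{\alpha!}\,\phi_{k}^{(\alpha)}(t_0-y).
\]
Expanding $(t-t_0)^{\alpha}$ in powers of $t$ puts this in the form \eqref{qIsPolynomial}, with each $b_{\alpha}^{k,j,I}(y)$ a finite constant-coefficient combination of the functions $e^{-2\pi i\,j2^{k}y}\phi_{k}^{(\beta)}(t_0-y)$; since $\phi\in\mathcal S$, these lie in $\mathcal S$. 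Taylor's formula with integral remainder gives
\[
\kappa_{k,j}(t,y)-q_{k,j,I}(t,y)=e^{-2\pi i\,j2^{k}y}\,\rho_{k}(t,y),\qquad \rho_{k}(t,y)=\frac1{(r-1)!}\int_{t_0}^{t}(t-u)^{r-1}\phi_{k}^{(r)}(u-y)\,du.
\]
The essential structural point is that $\rho_{k}$ does not depend on $j$: after the subtraction, all the $j$-dependence sits in the unimodular exponential.

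Next I would record a pointwise bound for $\rho_k$. Using $\phi_k^{(r)}(w)=2^{k(r+1)}\phi^{(r)}(2^{k}w)$ and $|\phi^{(r)}(w)|\le C_N(1+|w|)^{-N}$ for every $N$, together with $|t-t_0|\le|I|/2$ and $|u-y|\gtrsim|y-t_0|$ for $u$ between $t_0$ and $t$ and $y\notin 2I$, one gets
\[
|\rho_{k}(t,y)|\le C_N\,|I|^{r}\,2^{k(r+1)}\bigl(1+2^{k}|y-t_0|\bigr)^{-N},\qquad y\notin 2I,\ t\in I.
\]
On the annulus $I_\sigma$ one has $|y-t_0|\asymp 2^{\sigma}|I|$, so $2^{k}|y-t_0|\asymp x_{k}:=2^{k+\sigma}|I|$; since for $N>r+1$ the sum $\sum_{k}x_{k}^{r+1}(1+x_{k})^{-N}$ is a geometric-type series bounded by an absolute constant (independent of $\sigma$ and $|I|$), this yields $\sup_{t\in I,\ y\in I_\sigma}\sum_{k}|\rho_{k}(t,y)|\le C\,2^{-\sigma(r+1)}|I|^{-1}$.

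Then I would pair with $\xi$. Put $F_{k}(y)=\sum_{j:(k,j)\in\mathcal A}e^{-2\pi i\,j2^{k}y}\overline{\xi_{k,j}}$; this is $2^{-k}$-periodic with $\int_0^{2^{-k}}|F_{k}|^{2}=2^{-k}\sum_{j}|\xi_{k,j}|^{2}$, and the pointwise bound for $\rho_k$ shows that $\sum_{k}\rho_{k}(t,\cdot)F_{k}$ converges absolutely in $L^{2}(I_\sigma)$ and equals $\langle\kappa(t,\cdot)-q_I(t,\cdot),\xi\rangle_{l^2}$. Cauchy--Schwarz in the index $k$ (applied to the partial sums, then passing to the limit) gives
\[
\bigl|\langle\kappa(t,y)-q_I(t,y),\xi\rangle_{l^2}\bigr|^{2}\le\Bigl(\sum_{k}|\rho_{k}(t,y)|\Bigr)\Bigl(\sum_{k}|\rho_{k}(t,y)|\,|F_{k}(y)|^{2}\Bigr).
\]
Integrating over $y\in I_\sigma$, the first factor is pulled out by the supremum bound just obtained; for the second, periodicity of $|F_{k}|^{2}$ and $|I_\sigma|\asymp 2^{\sigma}|I|$ give $\int_{I_\sigma}|F_{k}|^{2}\le C(2^{\sigma}|I|+2^{-k})\sum_{j}|\xi_{k,j}|^{2}$, and combining this with the pointwise bound for $\rho_{k}$ and summing the resulting geometric series in $k$ yields $\int_{I_\sigma}\sum_{k}|\rho_{k}|\,|F_{k}|^{2}\le C\,2^{-\sigma r}|\xi|_{l^2}^{2}$. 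Multiplying the two estimates and taking square roots produces exactly $C\,2^{-\sigma(r+1/2)}|I|^{-1/2}|\xi|_{l^2}$, which is \eqref{Smooth3Estimate} with $B_r=r+\tfrac12$.

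I expect the main obstacle to be the treatment of the second index $j$. The kernel $\kappa(t,\cdot)-q_I(t,\cdot)$ need not lie in $l^{2}$ pointwise (for a fixed level $k$ there may be infinitely many admissible $j$), so one cannot estimate an $l^{2}$-norm of the kernel directly; the $j$-summation has to be handled through the periodicity of $F_{k}$, and it is the interplay between the localization of $\rho_{k}$ to the scale $2^{k}\asymp(2^{\sigma}|I|)^{-1}$ and the bound $\int_{I_\sigma}|F_{k}|^{2}\lesssim(2^{\sigma}|I|+2^{-k})\sum_{j}|\xi_{k,j}|^{2}$ that produces precisely the exponent $B_r=r+\tfrac12$, the ``$\tfrac12$'' coming from the $L^{2}$-normalization in $y$. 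Everything else — the Taylor remainder estimate and the two geometric summations over $k$ — is routine.
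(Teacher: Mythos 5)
Your proposal is correct and follows essentially the same route as the paper: the same Taylor-polynomial choice of $q_I$ (centered at the midpoint of $I$ in the $t$-variable, with the exponential factor left untouched so that the remainder $\rho_k$ is $j$-independent), the same Cauchy--Schwarz splitting in $k$ into the two factors $\sum_k\gamma_{k,\sigma}$ and $\sum_k\gamma_{k,\sigma}\int_{I_\sigma}|F_k|^2$, the same periodicity bound $\int_{I_\sigma}|F_k|^2\lesssim(2^\sigma|I|+2^{-k})\sum_j|\xi_{k,j}|^2$, and the same geometric summations in $k$ producing $2^{-\sigma(r+1)}|I|^{-1}$ and $2^{-\sigma r}|\xi|^2$ respectively. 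The paper merely writes the two $\gamma_{k,\sigma}$-bounds as separate cases rather than a single scale-invariant expression, which is a purely cosmetic difference.
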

Quite similar estimates can be found in \cite{Ki,KiPa,Ru}. However the exponent~$B_r$ was never calculated precisely. 
Since we need its exact value, we provide the proof of Lemma~\ref{Smooth3} in the last section.

Now we have all the components to prove Lemma~\ref{HIsBounded}. Our proof is similar to the proof of Theorem~4.21 in~\cite{KiKr}, but 
slightly more involved due to the complexity of condition~\eqref{Smooth3Estimate}.

\paragraph{Proof of Lemma~\ref{HIsBounded}.} 
Consider a point~$y_0$ such that $\mu = \widetilde M_{i,2,s}(f)(y_0)$ is a finite number (we recall that $\widetilde M_{i,2,s}$ is
defined by~\eqref{DefOfM2}). Then for any interval~$I$ 
with center~$y_0$, we have
$$
	\frac{1}{|I|^s}\bigg(\frac{1}{|I|}\int\limits_{I}|f-f_I|^2\bigg)^{1/2}\le\mu,
$$
where $f_I = \mathfrak{P}_I f$.

Let~$P_I$ be a sequence of polynomials in~$\mathcal{P}_r$ defined by the formula
\begin{multline}\label{HMinusP}
	P_I(t) = Hf(t) - \\\bigg[H\big(\chi_{2I}(f-f_I)\big)(t) + 
	\!\!\int\limits_{\mathbb{R}\setminus 2I}\!\! \big(\kappa(t,y) - q_I(t,y)\big)\big(f(y)-f_I(y)\big)\,dy\bigg].
\end{multline}
The fact that each element of~$P_I$ is well defined and belongs to~$\mathcal{P}_r$ follows immediately from
Lemma~\ref{fIsSuit}, Fact~\ref{HPIsZero}, and identity~\eqref{qIsPolynomial}. Note that we do not require $P_I$ to be an $l^2$-valued polynomial in $\mathcal{P}_r(l^2)$.

If we verify the estimate
$$
	\frac{1}{|I|^s}\bigg(\frac{1}{|I|}\int\limits_{I}\big|Hf - P_I\big|_{l^2}^2\bigg)^{1/2}\le C\mu,
$$ 
then we will prove Lemma~\ref{HIsBounded}.

First, we present some auxiliary propositions.
Applying the Cau\-chy--Schwarz inequality to Lemma~0.24 in~\cite{KiKr}, we obtain the following statement (it is worth noting that the lemma just mentioned is, in its turn, a simple consequence of considerations in~\cite{DeSh}).
\begin{Le}\label{LeFromBook}
For any two cubes $Q\subset Q_1 \subset\mathbb{R}^n$ and any polynomial $p \in \mathcal{P}_{i}$\textup, we have
$$
\|p\|_{L^{\infty}(Q_1)}\le C \bigg(\frac{|Q_1|}{|Q|}\bigg)^{\frac{i}{n}}\bigg(\frac{1}{|Q|}\int\limits_{Q}|p|^2\bigg)^{1/2},
$$
where a constant~$C$ depends only on $i$ and $n$.
\end{Le}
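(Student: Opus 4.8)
The statement to be proved is Lemma~\ref{LeFromBook}. The plan is to reduce it to two elementary facts about the finite-dimensional space $\mathcal{P}_i$: the equivalence of all norms on $\mathcal{P}_i$ over one fixed cube, and the controlled growth of a polynomial of degree $<i$ under dilation.

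First I would normalize by the reference cube $Q_0=[-\tfrac12,\tfrac12]^n$. Since $\mathcal{P}_i$ is finite-dimensional, there is a constant $C_{i,n}$ with $\|p\|_{L^\infty(Q_0)}\le C_{i,n}\big(\int_{Q_0}|p|^2\big)^{1/2}$ for every $p\in\mathcal{P}_i$. For an arbitrary cube $Q$ of side $\ell$ and center $c$, the affine substitution $y=c+\ell x$ carries $\mathcal{P}_i$ into itself and turns this into
$$\|p\|_{L^\infty(Q)}\le C_{i,n}\Big(\tfrac1{|Q|}\int_Q|p|^2\Big)^{1/2}.$$
So it remains to compare $\|p\|_{L^\infty(Q_1)}$ with $\|p\|_{L^\infty(Q)}$.

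For that comparison, let $L\ge\ell$ be the side of $Q_1$. Because $Q\subset Q_1$, the cube $Q'$ concentric with $Q$ and of side $2L$ already contains $Q_1$, so it suffices to bound $\|p\|_{L^\infty(Q')}$. In the variable $x=(y-c)/\ell$ the polynomial $p$ becomes some $\tilde p\in\mathcal{P}_i$ with $\|\tilde p\|_{L^\infty(Q_0)}=\|p\|_{L^\infty(Q)}$, and its restriction to $Q'$, read over $Q_0$, is $\tilde p\big((2L/\ell)\,\cdot\,\big)$. The elementary dilation estimate for polynomials of degree $<i$ — obtained by expanding $\tilde p$ in monomials and using that the coefficient functionals are bounded in $\|\cdot\|_{L^\infty(Q_0)}$ — gives
$$\big\|\tilde p\big((2L/\ell)\,\cdot\,\big)\big\|_{L^\infty(Q_0)}\le C_{i,n}\,(L/\ell)^{\,i-1}\,\|\tilde p\|_{L^\infty(Q_0)}.$$
Combining the two displays and using $(L/\ell)^n=|Q_1|/|Q|$ yields $\|p\|_{L^\infty(Q_1)}\le C_{i,n}\,(|Q_1|/|Q|)^{(i-1)/n}\big(\tfrac1{|Q|}\int_Q|p|^2\big)^{1/2}$, which is a shade stronger than the asserted inequality, since $|Q_1|/|Q|\ge1$ lets one replace the exponent $(i-1)/n$ by $i/n$.

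I do not expect a genuine obstacle here; the only two points that call for a word of care are that $Q$ and $Q_1$ need not be concentric — handled by passing to the slightly larger concentric cube $Q'$ — and that every constant produced this way is intrinsic to $\mathcal{P}_i$ and $Q_0$, hence depends only on $i$ and $n$. The $l^2$-valued version (needed when $p$ is a sequence of polynomials) follows by applying the scalar inequality to $\langle p(\cdot),\xi\rangle\in\mathcal{P}_i$ for unit vectors $\xi\in l^2$ and then a trace estimate. Alternatively, as the text indicates, one may simply invoke the $L^1$-average inequality of~\cite{KiKr} and upgrade it to the $L^2$-average by the Cauchy--Schwarz inequality.
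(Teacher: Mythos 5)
Your proof is correct, and it takes a genuinely different route from the paper. The paper does not argue from scratch: it simply invokes Lemma~0.24 of~\cite{KiKr} (which gives the bound with the $L^1$-average $\frac{1}{|Q|}\int_Q|p|$ on the right, and is itself traced back to~\cite{DeSh}) and upgrades the $L^1$-average to the $L^2$-average by the Cauchy--Schwarz inequality --- exactly the ``alternative'' you mention in your last sentence. Your argument instead proves the inequality directly: equivalence of the $L^\infty(Q_0)$- and $L^2(Q_0)$-norms on the finite-dimensional space $\mathcal{P}_i$, an affine change of variables to pass to an arbitrary cube $Q$, and the monomial-expansion dilation bound $\|\tilde p(\lambda\,\cdot)\|_{L^\infty(Q_0)}\le C_{i,n}\lambda^{i-1}\|\tilde p\|_{L^\infty(Q_0)}$ for $\lambda\ge 1$, with the non-concentricity of $Q$ and $Q_1$ absorbed by the concentric cube $Q'$ of side $2L$; all constants indeed depend only on $i$ and $n$. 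What your route buys, besides being self-contained, is the sharper exponent $(i-1)/n$ in place of $i/n$ (polynomials in $\mathcal{P}_i$ have degree at most $i-1$); this is worth noting because the later application in the proof of Lemma~\ref{HIsBounded}, namely the bound for $\|f_{2^{\alpha+1}I}-f_{2^{\alpha}I}\|_{L^{\infty}(2^{\sigma+1}I)}$ with the factor $\big(|2^{\sigma+1}I|/|2^{\alpha}I|\big)^{i-1}$, actually uses this sharper form, which your proof literally justifies, whereas the statement of Lemma~\ref{LeFromBook} as printed gives only the exponent $i$. Two cosmetic remarks: the degenerate case $i=0$ is trivial since $\mathcal{P}_0=\{0\}$; and for the $l^2$-valued variant no ``trace estimate'' is needed --- applying the scalar inequality to $\langle p(\cdot),\xi\rangle$ for unit $\xi\in l^2$, bounding $|\langle p(y),\xi\rangle|\le|p(y)|_{l^2}$ on the right, and then taking the supremum over $\xi$ and over $t\in Q_1$ already gives the vector-valued statement.
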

The properties of~$\mathfrak{P}_I$ imply the following estimate (which also can be found in~\cite{KiKr}).
\begin{Rem}\label{RemFromBook}
For any interval~$I$, we have
$$
\bigg(\frac{1}{|I|}\int\limits_I |f_{2I}-f_I|^2\bigg)^{1/2} \le C\bigg(\frac{1}{|2I|}\int\limits_{2I}|f-f_{2I}|^2\bigg)^{1/2}.
$$
\end{Rem}
\begin{proof}
\begin{multline*}
	\bigg(\frac{1}{|I|}\int\limits_I |f_{2I}-f_I|^2\bigg)^{1/2} = 
	\bigg(\frac{1}{|I|}\int\limits_I |\mathfrak{P}_I(f-f_{2I})|^2\bigg)^{1/2}\\ \le
	C\bigg(\frac{1}{|I|}\int\limits_I |f-f_{2I}|^2\bigg)^{1/2} \le	
	C'\bigg(\frac{1}{|2I|}\int\limits_{2I}|f-f_{2I}|^2\bigg)^{1/2}.
\end{multline*}
\end{proof}

Now we consider the first summand in square brackets in~\eqref{HMinusP}.
Using the $L^2$-boundedness of~$H$, we obtain
\begin{align*}
	\bigg(\frac{1}{|I|}\int\limits_I \big|H\big(\chi_{2I}(f-&f_I)\big)\big|_{l^2}^2\bigg)^{1/2} \\
	&\le C\bigg(\frac{1}{|I|} \int\limits_{2I}|f-f_I|^2\bigg)^{1/2}\\
	&\le C\bigg(\frac{1}{|I|}\int\limits_{2I}|f-f_{2I}|^2\bigg)^{1/2} + C'\,\|f_{2I}-f_I\|_{L^{\infty}(2I)}.
\end{align*}
By Lemma~\ref{LeFromBook} and Fact~\ref{RemFromBook}, we have
\begin{align*}
	\|f_{2I}-f_I\|_{L^{\infty}(2I)} &\le C\, 2^i\,\bigg(\frac{1}{|I|}\int\limits_{I}|f_{2I}-f_I|^2\bigg)^{1/2}\\
	&\le C'\bigg(\frac{1}{|2I|}\int\limits_{2I}|f-f_{2I}|^2\bigg)^{1/2}.
\end{align*}
Combining this inequality with the previous one, we arrive at the following estimate:
$$
\frac{1}{|I|^s}\bigg(\frac{1}{|I|}\int\limits_I \big|H\big(\chi_{2I}(f-f_I)\big)\big|_{l^2}^2\bigg)^{1/2} \le C\mu.
$$

It remains to treat the second summand in square brackets in~\eqref{HMinusP}. We set
$$
	W(t) = \!\!\int\limits_{\mathbb{R}\setminus 2I}\!\! \big(\kappa(t,y) - q_I(t,y)\big)\big(f(y)-f_I(y)\big)\,dy.
$$
We can write
\begin{align*}%\label{IntOfW}
	&\hskip-10pt\bigg(\frac{1}{|I|}\int\limits_{I}\big|W\big|_{l^2}^2\bigg)^{1/2}\\
	\nonumber &=|I|^{-1/2}\!\!\!\!\sup_{u:\:\|u\|\le 1}\bigg|\int\limits_{I}\big\langle u(t), 
	W(t)\big\rangle_{l^2} \,dt\bigg|\\
	\nonumber &\le |I|^{-1/2}\!\!\!\!\sup_{u:\:\|u\|\le 1} \int\limits_{I}\int\limits_{\mathbb{R}\setminus 2I}\!\! 
	\big|f(y)-f_I(y)\big|\,\big|\big\langle u(t), \kappa(t,y)-q_I(t,y)\big\rangle_{l^2}\big| \,dy\, dt,	
\end{align*}
where the supremum is taken over the unit ball in $L^2(I,l^2)$. By the Cau\-chy--Schwarz inequality, we see that
the last expression is not greater than
\begin{align*}
	|I|^{-1/2}\!\!\!\!\sup_{u:\:\|u\|\le 1}\sum_{\sigma=1}^\infty\, &\bigg(\int\limits_{I_\sigma} \big|f(y)-f_I(y)\big|^2 \,dy\bigg)^{1/2}\\
	&\times\int\limits_{I} \bigg(\int\limits_{I_\sigma}\big|\big\langle u(t), \kappa(t,y)-q_I(t,y)\big\rangle_{l^2}\big|^2 \,dy\bigg)^{1/2} dt.
\end{align*}
Lemma~\ref{Smooth3} implies that the second factor in each summand can be estimated~by
$$
C\, 2^{-\sigma (1/2+r)}\, |I|^{-1/2}\int\limits_{I} |u(t)|_{l^2}\,dt \le C'\,2^{-\sigma (1/2+r)}.
$$
Therefore, we must estimate the following expression:
\begin{equation*}%\label{SumOfInt}
|I|^{-1/2}\sum_{\sigma=1}^\infty 2^{-\sigma (1/2+r)} \bigg(\int\limits_{I_\sigma} |f-f_I|^2\bigg)^{1/2}.
\end{equation*}
We can write
\begin{align*}
\bigg(\int\limits_{2^{\sigma+1}I}\!\! &|f-f_I|^2\;\bigg)^{1/2}\\
&\le \bigg(\int\limits_{2^{\sigma+1}I}\!\! |f-f_{2^{\sigma+1}I}|^2\;\bigg)^{1/2} + 
\sum_{\alpha=1}^{\sigma}|2^{\sigma+1}I|^{1/2}\,\|f_{2^{\alpha+1}I}-f_{2^{\alpha}I}\|_{L^{\infty}(2^{\sigma+1}I)}.
\end{align*}
Applying Lemma~\ref{LeFromBook} and Fact~\ref{RemFromBook}, we obtain
\begin{align*}
	\|f_{2^{\alpha+1}I}-f_{2^{\alpha}I}\|_{L^{\infty}(2^{\sigma+1}I)}&\le 
	C\bigg(\frac{|2^{\sigma+1}I|}{|2^{\alpha}I|}\bigg)^{i-1}
	\bigg(\frac{1}{|2^{\alpha}I|}\int\limits_{2^{\alpha}I}|f_{2^{\alpha+1}I}-f_{2^{\alpha}I}|^2\bigg)^{1/2}\\
	&\le C'\, 2^{(\sigma-\alpha)(i-1)}\bigg(\frac{1}{|2^{\alpha+1}I|}\int\limits_{2^{\alpha+1}I}|f-f_{2^{\alpha+1}I}|^2\bigg)^{1/2}\\ 
	&\le C'\, 2^{(\sigma-\alpha)(i-1)}\, |2^{\alpha+1}I|^s\, \mu.
\end{align*}
Substituting the last expression into the previous estimate, we get
$$
	\bigg(\int\limits_{I_{\sigma}} |f-f_I|^2\;\bigg)^{1/2}\le 
	C\, |2^{\sigma+1}I|^{1/2}\,|I|^s\,\mu\,2^{\sigma(i-1)}\sum_{\alpha=1}^{\sigma} 2^{\alpha(s-i+1)}.
$$
Finally, we have
$$
	\frac{1}{|I|^s}\bigg(\frac{1}{|I|}\int\limits_I \big|W\big|_{l^2}^2\bigg)^{1/2} \le 
	C\mu\sum_{\sigma=1}^{\infty}\Big( 2^{-\sigma(r-i+1)}\sum_{\alpha=1}^{\sigma}2^{\alpha(s-i+1)}\Big).
$$
The sum over~$\alpha$ is dominated by a constant if $s-i+1 < 0$, by~$\sigma$ if $s-i+1 = 0$, and by $C2^{\sigma(s-i+1)}$ if $s-i+1 > 0$.
Since $r>\max \{s,i-1\}$, we see that the series in~$\sigma$ is convergent in any case, and we are done.

\section{Smoothness conditions}
In this chapter we prove Lemmas~\ref{Smooth1}, \ref{Smooth2}, and~\ref{Smooth3}.

\paragraph{Proof of Lemma~\ref{Smooth1}.}
%As we have seen, it is sufficient to prove this lemma for the functions~$\theta_v$ instead of~$\rho_{m,v}$.
Such estimates are widely known, and the corresponding proof can be found, for example, in~\cite{Ki}. Here we present its sketch.

Let $P_{u,v}$ be the Taylor polynomial of $\theta_v$ at the point $u$ of degree $r-1$.
Put $p_{v,I}(x,\tau) = P_{x_0-\tau,v}(x-\tau)$. 
Since $\theta \in \mathcal{S}$,
it is easily seen that
\begin{multline*}
	|\theta_v(x-\tau)-p_{v,I}(x,\tau)| \le C_{r,\beta}\, A^{v(r+1)}\,|x-x_0|^{r}\,(1 + A^v |\tau - x_0|)^{-\beta},\\
	\beta = 1,2,\dots.
\end{multline*}
Letting $\beta=0$ and $\beta=r+2$, we obtain
\begin{align}
	&|\theta_v(x-\tau)-p_{v,I}(x,\tau)| \le C_r\,A^{v(r+1)}\,|I|^{r},\label{est1}\\
	&|\theta_v(x-\tau)-p_{v,I}(x,\tau)| \le C_r\,A^{-v}\frac{|I|^{r}}{|\tau-x_0|^{r+2}}.\label{est2}
\end{align}
Estimate (\ref{est1}) is better then \eqref{est2} exactly when $A^v < \frac{1}{|\tau-x_0|}$. 
We set $p_{m,v,I}(x,\tau) = e^{2\pi i\,\delta_{m,v} \tau}p_{v,I}(x,\tau)$ and
split the sum in~\eqref{Smooth1Estimate} into two: the first over all $v$ such that $A^v < \frac{1}{|\tau-x_0|}$ and 
the second over all remaining~$v$. Using~\eqref{est1} for the summands in the first part and~\eqref{est2} for the rest, 
we obtain~\eqref{Smooth1Estimate}.

\paragraph{Proof of Lemma~\ref{Smooth2}.}
Let $P_{u,m,v}$ be the Taylor polynomial of $\Phi_{m,v}$ at the point $u$ of degree $r-1$. We recall that 
$
	\Phi_{m,v}(t)=e^{-2\pi i\,\delta_{m,v}t}\varphi_m(t)
$
and $\varphi_m(t) = l_m \varphi(l_m t)$, where $\varphi$ is a certain function in~$\mathcal{S}$.
Put $p_{m,v,I}(\tau,t) = P_{\tau_0-t,m,v}(\tau-t)$. Then we have
$$
|\Phi_{m,v}(\tau-t)-p_{m,v,I}(\tau,t)|\le 
C_r\!\!\sum_{\substack{0\le \alpha,\alpha' \le r \\ \alpha + \alpha' = r}}\!\!|\tau-\tau_0|^r\,\delta_{m,v}^{\alpha} |\varphi_m^{(\alpha')}(\eta)|,
$$
where $\eta$ lies between $\tau-t$ and $\tau_0-t$. Since $|\tau-t|\asymp|\tau_0-t|$,
we obtain
\begin{multline*}
|\tau-\tau_0|^r\,\delta_{m,v}^{\alpha} |\varphi_m^{(\alpha')}(\eta)| \le 
C_{r,\beta}\,|\tau-\tau_0|^r\,\delta_{m,v}^{\alpha} \,l_m^{\alpha'+1}(1+l_m|t-\tau_0|)^{-\beta},\\
	\beta = 1,2,\dots.
\end{multline*}
Note that $\delta_{m,v}\le l_m$. Thus, letting $\beta=0$ and $\beta=r+2$, we have
\begin{align*}
	&|\Phi_{m,v}(\tau-t)-p_{m,v,I}(\tau,t)| \le C_r\,l_m^{r+1}\,|I|^{r},\\
	&|\Phi_{m,v}(\tau-t)-p_{m,v,I}(\tau,t)| \le \frac{C_r|I|^{r}}{l_m|t-\tau_0|^{r+2}}.
\end{align*}
Using the first inequality if $l_m < \frac{1}{|t-\tau_0|}$ and the second otherwise, we arrive at the desired estimate.

\paragraph{Proof of Lemma~\ref{Smooth3}.}
Similar estimates and their proofs can be found in \mbox{\cite{Ki,KiPa,Ru}}. For completeness, we repeat the proof here. At the same time, we 
calculate~$B_r$.

We take the whole~$\mathbb{Z}^2$ instead of~$\mathcal{A}$. If we prove the lemma in such a setting, then we will be able to get the required estimate for any $\mathcal{A} \subset \mathbb{Z}^2$:
we will only need to consider the vectors $\xi = \{\xi_{k,j}\}_{(k,j)\in\mathbb{Z}^2}$ such that $\xi_{k,j} = 0$ 
for $(k,j) \notin \mathcal{A}$.
%As we will see, estimate~\eqref{Smooth3Estimate} can be proved for any set~$\mathcal{A}$ of pairs~$(k,j)$ and does not depend on it.
%So we suppose the set~$\mathcal{A}$ coincides with the whole~$\mathbb{Z}^2$.

We define the polynomials $p_{k,I}(t,y)$ for the functions $\phi_k$ in the same way as we defined $p_{v,I}$ 
for~$\theta_v$.
Estimates~\eqref{est1} and~\eqref{est2} are fulfilled for~$\phi_k$ and~$p_{k,I}$ when 
$A = 2$. We define the function $q_I(t,y)$ by the formula
$$
	q_I(t,y) = \big\{q_{k,j,I}(t,y)\big\}_{(k,j) \in \mathcal{A}} =  \big\{p_{k,I}(t,y)e^{-2\pi i\,j2^k y}\big\}_{(k,j) \in \mathcal{A}}.
$$
Also we put
\begin{displaymath}
	\gamma_{k,\sigma} = \sup_{y \in I_\sigma,\, t \in I}|\phi_k(t-y) - p_{k,I}(t,y)|.
\end{displaymath}
In this notation, we have
\begin{equation}\label{Smooth3Decomp}
\begin{aligned}
	\bigg(\int\limits_{I_\sigma} &\big|\langle\kappa(t,y) - q_I(t,y), \xi\rangle\big|^2 \,dy\bigg)^{1/2}\\
	&\le\bigg(\int\limits_{I_\sigma}\Big(\sum_{k \in \mathbb{Z}}|\phi_k(t-y) - p_{k,I}(t,y)|
	\Big|\sum_{j \in \mathbb{Z}} \xi_{k,j}\,e^{-2\pi i\,j2^k y} \Big|\Big)^2 \,dy\bigg)^{1/2}\\
	&\le\sum_k \gamma_{k,\sigma} \bigg(\int\limits_{I_\sigma} \Big|\sum_j \xi_{k,j}\,e^{-2\pi i\,j2^k y} \Big|^2\, dy\bigg)^{1/2}\\
	&\le\Big(\sum_k \gamma_{k,\sigma}\Big)^{1/2}
	\bigg(\sum_k \gamma_{k,\sigma} \int\limits_{I_\sigma}\Big|\sum_j \xi_{k,j}\,e^{-2\pi i\,j2^k y}\Big|^2\,dy\bigg)^{1/2};
\end{aligned}
\end{equation}
here we have used the triangle inequality in $L^2$ and the Cauchy inequality for sums. 

Next, by~\eqref{est1} and~\eqref{est2},
we obtain
\begin{equation}\label{EstimatesOnGamma}
	\gamma_{k,\sigma} \le C_r\, 2^{k(r+1)}\, |I|^{r},\quad \gamma_{k,\sigma} \le C_r\, 2^{-k}\, |I|^{-2}\, 2^{-(r+2)\sigma}.
\end{equation}
The second estimate is stronger then the first exactly when $2^k \ge |I|^{-1}2^{-\sigma}$. Splitting the sum $\sum\gamma_{k,\sigma}$ into
two parts accordingly, we have
\begin{equation}\label{FirstFactor}
\begin{aligned}
	\sum_{k} \gamma_{k,\sigma} &\le
	C_r \Big(\sum_{k:\:2^k < |I|^{-1}2^{-\sigma}}\!\!\!\!\!\!\!\! 2^{k(r+1)}\, |I|^{r} +
	\!\!\!\!\sum_{k:\:2^k\ge|I|^{-1}2^{-\sigma}}\!\!\!\!\!\!\!\! 2^{-k}\, |I|^{-2}\, 2^{-(r+2)\sigma}\Big)\\
	&\le C_r'\, 2^{-\sigma(r+1)}\, |I|^{-1}.
\rule{0pt}{20pt}
\end{aligned}
\end{equation}

It remains to estimate the second factor in the last expression in \eqref{Smooth3Decomp}. Making the substitution $\tilde y = 2^ky$ and using
the Riesz--Fischer theorem, we have
$$
	\int\limits_{2^{\sigma+1}I}\!\!\Big|\sum_{j}\xi_{k,j}\,e^{-2\pi i\, j2^k y}\Big|^2\, dy \le  
	\begin{cases}
		 \displaystyle C\,2^{\sigma+1}\,|I|\sum_{j}|\xi_{k,j}|^2, &2^{\sigma+1}|I|\ge 2^{-k};\\
		 \displaystyle 2^{-k}\sum_{j}|\xi_{k,j}|^2, &2^{\sigma+1}|I| < 2^{-k}.
	\end{cases}
$$
By these estimates and inequalities \eqref{EstimatesOnGamma}, we obtain
\begin{align*}
	\sum_k \gamma_{k,\sigma} &\int\limits_{I_\sigma}\Big|\sum_j \xi_{k,j}\,e^{-2\pi i\,j2^k y}\Big|^2\,dy\\
	&\le C\,|\xi|_{l^2}^2\Big(\sum_{k:\:2^k < |I|^{-1}2^{-\sigma-1}}\!\!\!\!\!\!\!\!\!\! \gamma_{k,\sigma}\, 2^{-k} +\!\!\!\!\! 
	\sum_{k:\:2^k\ge|I|^{-1}2^{-\sigma-1}}\!\!\!\!\!\!\!\!\!\! \gamma_{k,\sigma}\,2^{\sigma+1}\,|I|\Big) \\
	&\le C_r\,|\xi|_{l^2}^2\Big(\sum_{k:\:2^k < |I|^{-1}2^{-\sigma-1}}\!\!\!\!\!\!\!\!\!\! 2^{kr}\,|I|^{r}+\!\!\!\!\!
	\sum_{k:\:2^k\ge|I|^{-1}2^{-\sigma-1}}\!\!\!\!\!\!\!\!\!\! 2^{-k}\,2^{-(r+1)\sigma+1}\,|I|^{-1}\Big)
\rule{0pt}{20pt}\\
	&\le C_r'\,|\xi|_{l^2}^2\,2^{-r\sigma}.
\rule{0pt}{20pt}
\end{align*}
Combining this estimate with~\eqref{FirstFactor}, we conclude the proof.

\section{Acknowledgments}
The author expresses his gratitude to S.~V.~Kislyakov for valuable assistance in obtaining a proper formulation of the problem as well as for his helpful comments that have significantly improved the text.

\end{document}